\theoremstyle{plain}
\newtheorem{theorem}{Theorem}[section]
\newtheorem{cor}[theorem]{Corollary}
\newtheorem{prop}[theorem]{Proposition}
\newtheorem{lemma}[theorem]{Lemma}
\theoremstyle{definition}
\newtheorem{example}[theorem]{Example}
\newtheorem{rem}[theorem]{Remark}
\newtheorem{remark}[theorem]{Remark}
\newtheorem{definition}[theorem]{Definition}
\newcommand{\ds}{\displaystyle}
\newcommand{\A}{\mathcal{A}}
\newcommand{\N}{\mathbb{N}}
\newcommand{\C}{\mathbb{C}}
\newcommand{\K}{\mathbb{K}}
\newcommand{\numerical}{\text{nu}}
\newcommand{\e}{\varepsilon}
\newcommand{\eps}{\varepsilon}
\newcommand{\Ano}{\mathcal{A}_{\| \cdot \|}(X, Y)}
\newcommand{\Anoad}{\mathcal{A}_{\| \cdot \|}(Y^*, X^*)}
\newcommand{\AnoX}{\mathcal{A}_{\| \cdot \|}(X, X)}
\newcommand{\Anu}{\mathcal{A}_{\num}(X)}
\newcommand{\Anuad}{\mathcal{A}_{\num}(X^*)}
\newcommand{\AnuH}{\mathcal{A}_{\num}(H)}
\newcommand{\Anusum}{\mathcal{A}_{\num}(W \oplus_1 Z)}
\newcommand{\Anusuminf}{\mathcal{A}_{\num}(W \oplus_{\infty} Z)}
\newcommand{\AnoWZ}{\mathcal{A}_{\| \cdot \|}(W, Z)}
\newcommand{\pten}{\ensuremath{\widehat{\otimes}_\pi}}
\newcommand{\vertiii}[1]{{\left\vert\kern-0.25ex\left\vert\kern-0.25ex\left\vert #1 
 \right\vert\kern-0.25ex\right\vert\kern-0.25ex\right\vert}}
\DeclareMathOperator{\NA}{NA}
\DeclareMathOperator{\Id}{Id}
\renewcommand{\leq}{\leqslant}
\renewcommand{\geq}{\geqslant}
\renewcommand{\geq}{\geqslant}
\DeclareMathOperator{\num}{nu}
\DeclareMathOperator{\re}{Re}
\newcommand{\nn}[1]{{\left\vert\kern-0.25ex\left\vert\kern-0.25ex\left\vert #1 
		\right\vert\kern-0.25ex\right\vert\kern-0.25ex\right\vert}}
\renewcommand{\geq}{\geqslant}
\renewcommand{\leq}{\leqslant}
\title{Norm-attaining operators which satisfy a Bollob\'as type theorem}
\author[Dantas]{Sheldon Dantas}
\address[Dantas]{Department of Mathematics, Faculty of Electrical Engineering, Czech Technical University in Prague, Technick\'a 2, 166 27, Prague 6, Czech Republic \newline
\href{http://orcid.org/0000-0001-8117-3760}{ORCID: \texttt{0000-0001-8117-3760} } }
\email{\texttt{gildashe@fel.cvut.cz}}
\author[Jung]{Mingu Jung}
\address[Jung]{Department of Mathematics, POSTECH, Pohang 790-784, Republic of Korea \newline
\href{http://orcid.org/0000-0000-0000-0000}{ORCID: \texttt{0000-0003-2240-2855} }}
\email{\texttt{jmingoo@postech.ac.kr}}
\author[Rold\'an]{\'Oscar Rold\'an}
\address[Rold\'an]{Departamento de An\'alisis Matem\'atico, Universitat de Val\`encia, Valencia, Spain \newline
\href{https://orcid.org/0000-0002-1966-1330}{ORCID: \texttt{0000-0002-1966-1330} }}
\email{\texttt{oscar.roldan@uv.es}}
\thanks{The first author was supported by the project OPVVV CAAS CZ.02.1.01/0.0/0.0/16\_019/0000778 and by the Estonian Research Council grant PRG877. The second author was supported by NRF (NRF-2018R1A4A1023590). The third author was supported by the Spanish Ministerio de Ciencia, Innovación y Universidades, grant FPU17/02023 and by the MINECO and FEDER project  MTM2017-83262-C2-1-P}
\subjclass[2010]{Primary: 46B20; Secondary: 46B04, 46B25.}
\date{\today}
\keywords{norm-attaining operators; Bishop-Phelps-Bollob\'{a}s theorem}
\begin{document}
	
\begin{abstract} In this paper, we are interested in studying the set $\Ano$ of all norm-attaining operators $T$ from $X$ into $Y$ satisfying the following: given $\e>0$, there exists $\eta$ such that if $\|Tx\| > 1 - \eta$, then there is $x_0$ such that $\| x_0 - x\| < \e$ and $T$ itself attains its norm at $x_0$. We show that every norm one functional on $c_0$ which attains its norm belongs to $\mathcal{A}_{\|\cdot\|}(c_0, \K)$. Also, we prove that the analogous result holds neither for $\mathcal{A}_{\|\cdot\|}(\ell_1, \K)$ nor $\mathcal{A}_{\|\cdot\|}(\ell_{\infty}, \K)$. Under some assumptions, we show that the sphere of the compact operators belongs to $\Ano$ and that this is no longer true when some of these hypotheses are dropped. The analogous set $\Anu$ for numerical radius of an operator instead of its norm is also defined and studied. We present a complete characterization for the diagonal operators which belong to the sets $\mathcal{A}_{\| \cdot \|}(X, X)$ and $\mathcal{A}_{\numerical}(X)$ when $X=c_0$ or $\ell_p$. As a consequence, we get that the canonical projections $P_N$ on these spaces belong to our sets. We give examples of operators on infinite dimensional Banach spaces which belong to $\mathcal{A}_{\| \cdot \|}(X, X)$ but not to $\Anu$ and vice-versa. Finally, we establish some techniques which allow us to connect both sets by using direct sums.
 
\end{abstract}

\maketitle
%%%%%%%%%%%%%%%%%%%%%%%%%%%%%%%%%%%%%%%%%%%%%%%%%%%%%%%%%%%%%%%%
\section{Introduction and Motivation}
%%%%%%%%%%%%%%%%%%%%%%%%%%%%%%%%%%%%%%%%%%%%%%%%%%%%%%%%%%%%%%%%

The famous theorem due to Bollob\'as on functionals which attain their norms states that if $x^*$ is a norm one functional which almost attains its norm at some element $x$, in the sense that $x^*(x) > 1 - \eta$ for some $\eta > 0$, then there exist a new functional $x_0^*$ and a new element $x_0$ such that $x_0^*$ attains its norm at $x_0$, $x_0 \approx x$, and $x_0^* \approx x^*$ (see \cite{Bol}). This result opened the gate to further discussion on this topic and nowadays we have a large literature about various classes of functions which attain their norms and satisfy a Bollob\'as type result (see, for instance, \cite{A, AAGM, CDJ, D, DKKLM2, DKLM, KL, S, T} and the references therein). Lindenstrauss in \cite{L} was the first one who answers negatively a question posed by Bishop and Phelps in \cite{BP} on the density of linear operators which attain their norms. In particular, Bollob\'as' result is no longer true for this class of functions, which had allowed many researchers to study systematically when we have a Bollob\'as type result for other functions rather than functionals. For a starting point on this topic, we suggest the reader the seminal paper \cite{AAGM}.

In order to explain properly what we will be doing in this paper, we briefly present some necessary notation so that the reader can follow the ideas easily. We denote by $B_X$ and $S_X$ the closed unit ball and the unit sphere of the Banach space $X$, respectively. We denote by $X^*$ the topological dual of $X$. Given two Banach spaces $X$ and $Y$, we denote by $\mathcal{L}(X, Y)$ the set of all bounded linear operators and by $\mathcal{L} (X)$ if $X= Y$. We will be using both notation $\langle x^*, x \rangle$ and $x^*(x)$ indistinctly throughout the paper for the action of an element $x^* \in X^*$ at an element $x \in X$. We say that $T \in \mathcal{L}(X, Y)$ attains its norm if there exists $x_0 \in S_X$ such that $\|T(x_0)\| = \|T\|$. In this case, we say that $T$ is a norm-attaining operator. The set of norm attaining operators from $X$ into $Y$ is denoted by $\operatorname{NA}(X, Y)$. We introduce the set of all states on $X$ by $\Pi(X) = \{(x,x^*) \in S_X \times S_{X^*}: \langle x^*, x \rangle = 1\}$ and for a given operator $T \in \mathcal{L}(X)$, the numerical radius of $T$ is defined as $\nu(T) := \sup \{|\langle x^*, T(x) \rangle|: (x, x^*) \in \Pi(X)\}$. Notice that we always have $v(T) \leq \|T\|$. We say that $T$ attains the numerical radius when there is $(x_0, x_0^*) \in \Pi(X)$ such that $|\langle x_0^* , T(x_0)\rangle| = \nu(T)$. In this case, we say that $T$ is a numerical radius attaining operator. For a background on this topic, we refer to \cite{BD1, BD2}.

In this paper, we are interested in studying a set of linear operators which satisfy a Bollob\'as type theorem in a sense which will be clear in a moment. This was motivated by a natural question, although quite restrictive at first glance, whether we can get a Bollob\'as theorem without changing the initial operator which almost attains its norm. In other words, given $\e>0$, is it true that there exists $\eta > 0$, which depends just on $\e$, such that if $\|Tx\| > 1 - \eta$, then there exists $x_0$ such that $x_0 \approx x$ and $T$ {\it itself} attains its norm at $x_0$? It turns out that the answer for this problem is negative whenever the dimension of the involved Banach spaces are bigger than 2 (see \cite[Theorem 2.1]{DKKLM2}) and, on the other hand, it characterizes uniformly convex Banach spaces when we consider the problem for linear functionals (see \cite[Theorem 2.1]{KL}). Since there is no hope for a uniform version for the operator case of this problem (in the sense that $\eta$ depends just on a given $\e>0$) and the functional case is completely characterized, it seems to be reasonable considering the same problem but now taking $\eta$ depending not just on $\e$ but also on a fixed norm one operator $T$. This was done in \cite{D, DKLM, DKLM2, S, T} and many positive results come out differently from the uniform case. Here, we will be working with a set of operators which satisfy such a property. Let us give the precise definitions.

\begin{definition} \label{maindefinition} Let $X, Y$ be Banach spaces.
	\begin{itemize}
		\item[(i)] $\Ano$ stands for the set of all norm-attaining operators $T \in \mathcal{L}(X, Y)$ with $\|T\| = 1$ such that if $\e > 0$, then there is $\eta(\e, T) > 0$ such that whenever $x \in S_X$ satisfies $\|T(x)\| > 1 - \eta(\e, T)$, there is $x_0 \in S_X$ such that $\|T(x_0)\| = 1$ and $\|x_0 - x\| < \e$.
	
	\vspace{0.2cm}
		
		\item[(ii)] $\Anu$ stands for the set of all numerical radius attaining operators $T \in \mathcal{L}(X)$ with $\nu(T) = 1$ such that if $\e > 0$, then there is $\eta(\e, T) > 0$ such that whenever $(x, x^*) \in \Pi(X)$ satisfies $|\langle x^*, T(x) \rangle| > 1 - \eta(\e, T)$, there is $(x_0, x_0^*) \in \Pi(X)$ such that $|\langle x_0^*, T(x_0) \rangle| = 1$, $\|x_0 - x\| < \e$, and $\|x_0^* - x^*\| < \e$.	 
	\end{itemize}
\end{definition}

Let us notice the following. Suppose that the pair of Banach spaces $(X, Y)$ satisfies the following property: given $\e>0$ and $T \in S_{\mathcal{L}(X, Y)}$, there exists $\eta(\e, T) >0$ such that whenever $x \in S_X$ satisfies $\|Tx\| > 1 - \eta(\e, T)$, then there exists $x_0 \in S_X$ such that $\|Tx_0\| = 1$ and $\|x_0 - x\| < \eps$. In this case, we have clearly that all norm one operators from $X$ into $Y$ will belong to the set $\mathcal{A}_{\|\cdot\|}(X, Y)$. Notice also that if $(X, Y)$ satisfies such a property, then $X$ must be reflexive by the James theorem since, in this case, every operator attains its norm. Studying the set $\mathcal{A}_{\|\cdot\|}$ gives us more freedom in the sense that we do not have to restrict ourselves to any condition on the involved spaces but of course on the definition of a concrete operator. On the other hand, very recently this property was used in \cite{DJRR} as a tool to prove that every nuclear operator can be approximated (in the nuclear norm) by nuclear operators which attain their nuclear norms. This makes us think that, studying the sets $\mathcal{A}_{\|\cdot\|}$ and $\mathcal{A}_{\numerical}$, might be helpful to get similar results in the context of tensor products by using an analogous definition of the set $\mathcal{A}_{\|\cdot\|}$ for bilinear mappings. For instance (and this will be just a motivational thought by now), consider the projective tensor product $X \pten Y$ between two Banach spaces $X$ and $Y$, and denote by $\|\cdot\|_{\pi}$ the projective tensor norm on $X \pten Y$. Define $\mathcal{A}_{\|\cdot\|}(X \times Y)$ the set of all norm-attaining bilinear mappings satisfying its corresponding version of Definition \ref{maindefinition}.(i). If $B \in \mathcal{A}_{\|\cdot\|}(X \times Y)$ and $|B(z)| > 1 - \eta(\e, B)^2$, where $z \in X \pten Y$ with $\|z\|_{\pi} = 1$ and $\eta(\eps, B)>0$ is function which appears in the definition, then there exists a norm-attaining tensor $z' \in X \pten Y$ such that $B(z') = 1$ and $\|z' - z\|_{\pi} < \delta(\e)$, where $\delta(\e) > 0$ is small. This leads us to the natural question of how often a bilinear mapping belongs to $\mathcal{A}_{\|\cdot\|}$ and how often the function $\eta(\eps, B)$ depends just on $\e$, since this would imply the density of the set of the tensors which attain their projective norms (see \cite[Proposition 4.3]{DJRR} for more information in this direction). Thanks to the natural isometric identification between the bilinear mappings on $X \times Y$ and the operators from $X$ into $Y^*$, our study on the sets $\mathcal{A}_{\|\cdot\|}$ and $\mathcal{A}_{\numerical}$ for operators might derive in new progresses on both nuclear operators and tensors which attain their nuclear and projective norms, respectively.

Now, we describe the content of this paper. We start by showing, as expected, that when we are working with finite dimensional spaces, we have a positive result. That is, if $\dim(X)<\infty$, then the set $\Ano$ coincides with the sphere of $\mathcal{L}(X, Y)$ for every Banach space $Y$ and the set $\Anu$ coincides with the set of all operators with numerical radius one. As a consequence of it, we get that every norm one functional on $c_0$ which attains its norm belongs to $\mathcal{A}_{\|\cdot\|}(c_0, \K)$ by using the canonical embedding from a finite dimensional Euclidian space $(\K^n,\,\|\cdot\|_{\infty}) $ (the space $\K^n$ with the topology induced by the norm of $c_0$) into $c_0$. On the other hand, we present examples of norm one functionals on $\ell_1$ and $\ell_{\infty}$ which attain their norms but cannot be in $\mathcal{A}_{\|\cdot\|}(\ell_1, \K)$ and $\mathcal{A}_{\|\cdot\|}(\ell_{\infty}, \K)$, respectively. Next, we show that under some assumptions on the Banach space $X$, the sphere of the compact operators is contained in $\Ano$ and also the set of all compact operators $T$ with $\nu(T) = \|T\| = 1$ belongs to $\Anu$. Moreover, we provide some counterexamples which show that the result is no longer true by dropping some of these hypothesis. %We construct an operator which belongs to $\mathcal{A}_{\numerical}$ in a unifom way (see Proposition \ref{ex4}). 
Some conditions on $X$ which guarantee the possibility to pass from $\Ano$ to $\mathcal{A}_{\|\cdot\|}(Y^*, X^*)$ (analogously, from $\Anu$ to $\mathcal{A}_{\numerical}(X^*)$) via the adjoint operation are also considered. As one of main results, we give a complete characterization for the diagonal operators when such operators belong to $\mathcal{A}_{\|\cdot\|}(X, X)$ for $X=c_0$ or $\ell_p$ with $1\leq p \leq \infty$, and to $\mathcal{A}_{\numerical}(X)$ in the same cases except $p=\infty$. As a consequence, the canonical projections $P_N$ belong to these sets. Finally, in the last section, we study some relations between $\A_{\|\cdot\|} (X, Y)$ and $\A_{\numerical} (X \oplus Y)$ through the natural correspondences between $\mathcal{L} (X,Y)$ and $\mathcal{L} (X \oplus Y)$. 

%%%%%%%%%%%%%%%%%%%%%%%%%%%%%%%%%%%%%%%%%%%%%%%%%%%%%%%%%%%%%%%%
\section{Main results}
%%%%%%%%%%%%%%%%%%%%%%%%%%%%%%%%%%%%%%%%%%%%%%%%%%%%%%%%%%%%%%%%

In this section, we present the main results of the paper. Recall that in finite dimensional Banach spaces, every operator $T$ attains both norm and numerical radius by compactness. The following result shows that, when $X$ is finite dimensional, we can describe the sets $\Ano$ and $\Anu$ entirely. Moreover, we show that $S_{\ell_1} \cap \NA(c_0, \K)$ is always contained in $\A_{\|\cdot\|}(c_0, \mathbb{K})$.

\begin{theorem} \label{finitedim} Let $X$ be a finite dimensional Banach space. Then
\begin{itemize}
	\item [(i)] $\Ano = \{T \in \mathcal{L}(X,Y) : \| T \| =1\}$ for any Banach space $Y$,
	\item[(ii)] $\Anu = \{T \in \mathcal{L}(X) : \nu (T) =1\}$,
	\item[(iii)] Every norm one functional on $c_0$ which attains the norm belongs to $\A_{\|\cdot\|}(c_0, \mathbb{K})$. 
\end{itemize}
\end{theorem}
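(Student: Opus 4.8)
The plan is to prove (i) and (ii) together by a compactness argument, and then to deduce (iii) by restricting a norm-attaining functional to the finite-dimensional subspace spanned by its support.

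For (i), fix $T \in \mathcal{L}(X,Y)$ with $\|T\|=1$. Since $X$ is finite dimensional, $S_X$ is compact and $x \mapsto \|T(x)\|$ is continuous, so $T$ attains its norm and the set $N_T := \{x \in S_X : \|T(x)\|=1\}$ is a nonempty compact subset of $S_X$; this already shows membership requires nothing beyond $\|T\|=1$, and the reverse inclusion is trivial. Given $\e>0$, I would look at $K_\e := \{x \in S_X : \dist(x,N_T)\geq \e\}$, which is closed in $S_X$, hence compact. On $K_\e$ the continuous function $x\mapsto \|T(x)\|$ never attains the value $1$ (such a point would lie in $N_T$, at distance $0$), so it attains a maximum $M<1$ there (if $K_\e=\es$, any $\eta$ works). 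Setting $\eta(\e,T):=1-M>0$, any $x\in S_X$ with $\|T(x)\|>1-\eta$ must avoid $K_\e$, i.e. $\dist(x,N_T)<\e$; this is exactly the defining property, so $T\in\Ano$. For (ii) I would run the identical argument on the compact set $\Pi(X)$ (compact because $S_X$ and $S_{X^*}$ are, and $\Pi(X)$ is closed), with $\|T(x)\|$ replaced by the continuous map $(x,x^*)\mapsto |\langle x^*,T(x)\rangle|$, $N_T$ replaced by $\{(x,x^*)\in\Pi(X):|\langle x^*,T(x)\rangle|=1\}$, and the metric on $S_X$ replaced by the max-metric $\max\{\|x-y\|,\|x^*-y^*\|\}$ on $\Pi(X)$.

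For (iii), I would first record which $x^*=(a_n)\in S_{\ell_1}=S_{c_0^*}$ attain their norm. If $x^*$ attains its norm at $x_0\in S_{c_0}$, then equality throughout $1=\big|\sum_n a_n x_{0,n}\big|\leq \sum_n |a_n||x_{0,n}|\leq \sum_n|a_n|=1$ forces $|x_{0,n}|=1$ whenever $a_n\neq 0$; as $x_0\in c_0$, this holds for only finitely many $n$, so $x^*$ has finite support, say in $\{1,\dots,N\}$. I then view $x^*$ as a norm-one functional $\tilde x^*=(a_1,\dots,a_N)$ on the finite-dimensional space $E:=(\K^N,\|\cdot\|_\infty)$, noting $\|\tilde x^*\|_{E^*}=\sum_{n=1}^N|a_n|=1$. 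By part (i), $\tilde x^*\in\mathcal{A}_{\|\cdot\|}(E,\K)$.

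Now, given $\e>0$, let $\eta_0=\eta(\e/2,\tilde x^*)$ be the constant from (i) and set $\eta:=\min\{\eta_0,\e/2\}$. For $x\in S_{c_0}$ with $|x^*(x)|>1-\eta$, put $y:=(x_1,\dots,x_N)\in E$; then $\|y\|_\infty>1-\eta>0$, and the normalization $y':=y/\|y\|_\infty\in S_E$ satisfies $|\tilde x^*(y')|>1-\eta\geq 1-\eta_0$, so (i) yields $y_0\in S_E$ with $|\tilde x^*(y_0)|=1$ and $\|y_0-y'\|_\infty<\e/2$. Finally I would lift $y_0$ by setting $x_0:=(y_0,x_{N+1},x_{N+2},\dots)\in c_0$: since $\|x\|_\infty=1$ one has $\|x_0\|_\infty=\max\{1,\sup_{n>N}|x_n|\}=1$, and $|x^*(x_0)|=|\tilde x^*(y_0)|=1$, so $x^*$ attains its norm at $x_0$; moreover $\|x_0-x\|_\infty=\|y_0-y\|_\infty\leq \|y_0-y'\|_\infty+(1-\|y\|_\infty)<\e/2+\eta\leq\e$, giving $x^*\in\mathcal{A}_{\|\cdot\|}(c_0,\K)$.

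The routine parts (i) and (ii) become clean once phrased as ``the maximum of a continuous function on a compact set is strictly below $1$'' rather than via sequences. The only genuinely delicate point is the reduction in (iii): I must verify that norm-attainment really does force finite support (the equality analysis above) and that replacing $y$ by the normalized $y'$ costs only the extra $1-\|y\|_\infty<\eta$ in the final estimate, which is why $\eta$ is taken $\leq\e/2$ so this error can be absorbed.
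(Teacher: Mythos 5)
Your proof is correct and follows essentially the same route as the paper: a compactness argument on $S_X$ (resp.\ on $\Pi(X)$) for (i) and (ii), and for (iii) the identical reduction---norm-attainment forces finite support, restriction of $x^*$ to $(\K^N,\|\cdot\|_\infty)$, application of part (i) to the restricted functional, normalization of the truncated vector, and lifting back to $c_0$ with the same $\e/2+\e/2$ error split. The only difference is presentational: you spell out the compactness argument and the finite-support claim, which the paper respectively cites from the literature and asserts without proof.
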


\begin{proof} Items (i) and (ii) are proved by using the compactness of the unit ball of the finite dimensional space $X$ as in \cite[Proposition 2.4]{AAGM} or \cite[Theorem 2.4]{D}. To prove (iii), suppose $x^* \in S_{c_0^*}$ attains its norm at some point in $B_{c_0}$. Then, there exists $n_0 \in \mathbb{N}$ so that $x^* (n)=0$ for every $n > n_0$. Let $\Psi : (\mathbb{K}^{n_0},\| \cdot \|_{\infty}) \rightarrow c_0$ be the canonical embedding into $c_0$ that sends $(k_1, \cdots, k_{n_0}) \mapsto (k_1, \cdots, k_{n_0}, 0, 0, \cdots)$. It is easy to see that $\| \Psi \| = 1$. Moreover, $\| x^* \circ \Psi \| = 1$, so (i) implies that  $x^* \circ \Psi \in \A_{\|\cdot\|} (\mathbb{K}^{n_0}, \mathbb{K})$. Given $\e>0$, define
$$\delta(\varepsilon, x^*):=\min \left\{ \frac{\varepsilon}{2}, \eta\left( \frac{\eps}{2}, x^*\circ \Psi\right) \right\}$$
and suppose that $| \langle x^*, x_0\rangle | > 1 - \delta(\varepsilon, x^*)$ for some point $x_0 \in S_{c_0}$. Let $z_0\in \mathbb{K}^{n_0}$ be the point such that $z_0(n)=x_0(n)$ for $1\leq n\leq n_0$. Then, 
\begin{equation*} 
\left|(x^*\circ \Psi) \left(\frac{z_0}{\| z_0\|_{\infty}} \right) \right| > 1-\delta(\varepsilon, x^*) 
\end{equation*} 
so, there is $u_0 \in S_{\mathbb{K}^{n_0}}$ such that $|( x^* \circ \Psi )(u_0) | = 1$ and $ \| u_0 - \frac{z_0}{\| z_0\|_{\infty}} \|_\infty < \frac{\varepsilon}{2}$. Finally, let $v_0\in c_0$ be such that $v_0(n)=u_0(n)$ for $1\leq n\leq n_0$ and $v_0(n)=x_0(n)$ for $n>n_0$. It follows that $x^*$ attains its norm at $v_0 \in S_{c_0}$ and
$$\| v_0 - x_0 \| = \|u_0 - z_0\|_\infty \leq \left\| u_0 - \frac{z_0}{\| z_0\|_\infty}\right\|_\infty + \left\| \frac{z_0}{\| z_0\|_\infty} - z_0\right\|_\infty < \frac{\varepsilon}{2} + (1 - \| z_0\|) \leq \eps.$$
\end{proof}

Concerning linear functionals on $\ell_p$-spaces, we have the following result.

\begin{prop} Let $X$ be a Banach space.
\begin{itemize}
\item[(i)] If $X$ is uniformly convex, then $S_{X^*} = \mathcal{A}_{\|\cdot\|}(X, \K)$.
\item[(ii)] There is $x^* \in \NA(\ell_1, \mathbb{K}) \cap S_{\ell_{\infty}}$ such that $x^* \not\in \mathcal{A}_{\|\cdot\|} (\ell_1, \K)$.
\item[(iii)] There is $x^* \in \NA(\ell_{\infty}, \mathbb{K}) \cap S_{\ell_{\infty}^*}$ such that $x^* \not\in \mathcal{A}_{\|\cdot\|} (\ell_{\infty}, \K)$.
\end{itemize}
\end{prop}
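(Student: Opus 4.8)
For part (i), the plan is to produce the function $\eta(\varepsilon,x^*)$ directly from the modulus of uniform convexity, and in fact it will depend only on $\varepsilon$. Since a uniformly convex space is reflexive, the James theorem gives $\NA(X,\K)=S_{X^*}$, and the inclusion $\mathcal{A}_{\|\cdot\|}(X,\K)\subseteq S_{X^*}$ is immediate from the definition; so the content is the reverse inclusion $S_{X^*}\subseteq \mathcal{A}_{\|\cdot\|}(X,\K)$. Fix $x^*\in S_{X^*}$ and a point $x_0\in S_X$ with $x^*(x_0)=1$. Given $\varepsilon>0$, let $\delta=\delta_X(\varepsilon)>0$ be the modulus of convexity and put $\eta:=2\delta$. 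If $y\in S_X$ satisfies $x^*(y)>1-\eta$ (real-valued), then $\bigl\|\tfrac{y+x_0}{2}\bigr\|\geq x^*\bigl(\tfrac{y+x_0}{2}\bigr)>1-\delta$, which forces $\|y-x_0\|<\varepsilon$ by uniform convexity. To treat an arbitrary $x\in S_X$ with $|x^*(x)|>1-\eta$ in the complex case, I would first rotate: choosing a unimodular $\lambda$ with $\lambda\, x^*(x)=|x^*(x)|$ and applying the previous step to $y=\lambda x$ gives $\|\lambda x-x_0\|<\varepsilon$, so $\bar\lambda x_0$ is a norm-attaining point (since $|x^*(\bar\lambda x_0)|=1$) with $\|x-\bar\lambda x_0\|<\varepsilon$. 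This yields the Bollob\'as property with $\eta$ depending only on $\varepsilon$, recovering the characterization cited from \cite{KL}.

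For part (ii), I identify $\ell_1^*=\ell_\infty$ and take the functional $x^*=a$ with $a_1=1$ and $a_n=1-1/n$ for $n\geq 2$, so that $\|a\|_\infty=1$ is attained, as a maximum, only at the first coordinate. The plan is then to determine the norm-attaining set: since $|a_n|<1$ strictly for $n\geq 2$, the chain $|x^*(x)|\leq \sum_n |a_n||x_n|\leq \sum_n |x_n|=1$ forces $x_n=0$ for all $n\geq 2$, so the norm-attaining points in $S_{\ell_1}$ are exactly $\{\lambda e_1:|\lambda|=1\}$. Finally I observe that the unit vectors $e_k$ escape this set: $|x^*(e_k)|=1-1/k\to 1$ while $\|e_k-\lambda e_1\|_1=2$ for every $k\geq 2$ and every unimodular $\lambda$. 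Hence for $\varepsilon=1$ no admissible $\eta$ can exist, and $x^*\notin\mathcal{A}_{\|\cdot\|}(\ell_1,\K)$.

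For part (iii), I view $\ell_1\hookrightarrow \ell_\infty^*$ and take $x^*(x)=\sum_{n=1}^\infty x_n/2^n$, which has norm $\sum_n 2^{-n}=1$ and attains it at the constant sequence $\mathbf 1=(1,1,\dots)$. Again the main point is to pin down the norm-attaining set: from $|x^*(x)|\leq \sum_n |x_n|/2^n\leq 1$ with strictly positive weights, equality forces $|x_n|=1$ for all $n$ with a common phase, so the norm-attaining points are exactly $\{\lambda\mathbf 1:|\lambda|=1\}$. I then exhibit escaping near-maximizers by flipping the tail: let $x^{(k)}\in S_{\ell_\infty}$ equal $1$ on the first $k$ coordinates and $-1$ afterward, so that $x^*(x^{(k)})=1-2^{1-k}\to 1$ while $\|x^{(k)}-\lambda\mathbf 1\|_\infty$ stays bounded below by a positive constant (equal to $2$ in the real case) independent of $k$. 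This contradicts the Bollob\'as property for a small fixed $\varepsilon$.

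The arguments are all elementary; the only step demanding genuine care, and the one I would treat as the main obstacle, is the determination of the \emph{entire} set of norm-attaining points in (ii) and (iii). The escaping-sequence trick proves failure of the Bollob\'as property only if no norm-attaining point lurks near the escaping vectors, which is exactly why the strict-inequality structure of the weights, yielding a unique support in (ii) and a unique constant profile up to phase in (iii), is essential.
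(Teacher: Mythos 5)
Your proposal is correct and follows essentially the same route as the paper: parts (ii) and (iii) use the very same functionals (weights $1-1/n$ on $\ell_1$, weights $2^{-n}$ on $\ell_\infty$), the same determination of the full norm-attaining sets, and the same escaping near-maximizers (your $-1$ tail in (iii) versus the paper's $0$ tail is an immaterial variation). For (i) the paper simply argues "as in" \cite{KL}, whereas you spell out the standard modulus-of-convexity argument; this is the same underlying idea, with the added observation that $\eta$ can be taken to depend on $\varepsilon$ only.
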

\begin{proof} For item (i), we argue as in \cite[Theorem 2.1]{KL}. Let us prove (ii) now. Consider the norm one functional $z^* := \left(1, \frac{1}{2},\frac{2}{3}, \ldots, \frac{n-1}{n}, \ldots \right) \in \ell_{\infty}$. Notice that $z^*$ is a norm-attaining functional and it is not difficult to see that the rotations of the unit vector $e_1 \in S_{\ell_1}$ are the only norming points of $z^*$, that is, if $| \langle z^*, z \rangle | = 1$ with $z \in S_{\ell_1}$, then $z$ is of the form $z = e^{i \theta} e_1$ for some $\theta \in [0,2\pi)$. Given $\e>0$, suppose that there is such a $\eta(\e, z^*) > 0$. We take $k \in \N$ to be such that $\frac{1}{k} < \eta(\e, z^*)$ and then $| \langle z^*, e_k \rangle | > 1 - \eta(\e, z^*)$.
This means that there is $z \in S_{\ell_1}$ such that $| \langle z^*, z \rangle | = 1$ and $\|z - e_k\|_1 < \e$. This implies that $z = e^{i \theta}e_1$ and $\|e^{i \theta}e_1 - e_k\|_1 = 2$, which is a contradiction. 

For item (iii), consider the functional $ x^* := \left(\frac{1}{2}, \frac{1}{2^2}, \frac{1}{2^3}, \ldots \right)$ on $\ell_\infty$, which is as an element in $S_{\ell_1}$; hence is  embedded in $S_{\ell_{\infty}^*}$. If there is $z = ( z(n) )_{n=1}^{\infty} \in S_{\ell_{\infty}}$ such that $| \langle x^* , z \rangle| = \| x^* \| = 1$, then 
\begin{equation*}
1 = | \langle x^* , z \rangle| = \left| \sum_{n=1}^{\infty} \frac{1}{2^n} z(n) \right| \leq \sum_{n=1}^{\infty} \frac{1}{2^n} | z(n) | \leq 1.
\end{equation*}
From this, we get that $z(n) = e^{i \theta}$ for all $n \in \N$. Now, assuming that such a $\eta(\e, x^* ) > 0$ exists, we take $k \in \N$ with $2^k \eta(\e, x^* ) > 1$ and consider the element $e_1 + \ldots + e_k \in S_{\ell_{\infty}}$. Then, $
| \langle x^* , e_1 + \ldots + e_k \rangle |  > 1 - \eta(\e, x^* ).$
So, there is $x \in S_{\ell_{\infty}}$ such that $| \langle x^* , x \rangle| = 1$ and $\|x - (e_1 + \ldots + e_k)\|_{\infty} < \e$, which leads to a contradiction since $\|x - (e_1 + \ldots + e_k)\|_{\infty} \geq 1$.
\end{proof}

Let us observe that it is immediate that an operator which has norm one but does not attain the norm cannot be in $\Ano$ by its definition. Analogously, the same argument for non numerical radius operators applies for the set $\Anu$. Nevertheless, we present in Example \ref{ex2} a norm one operator which attains its norm and numerical radius but belongs neither to $\mathcal{A}_{\| \cdot \|}(X, X )$ nor to $\Anu$.

%Note that the above operator $T$ is a finite rank operator (hence a compact operator). It is also possible to construct a non-finite rank operator which attains its norm and nuerical radius but does not belong to any local class.

\begin{example} \label{ex2} Let $p>0$ and $q>0$ be such that $\frac{1}{p} + \frac{1}{q} = 1$. We consider the spaces $\ell_p$ and $\ell_q$ as $\ell_p (\ell_p^2)$ and $\ell_q(\ell_q^2)$, respectively, where $\ell_p^2 = (\K^2, \| \cdot \|_p)$. For each $n \in \N$, we define $T_n\in  \mathcal{L} (\ell_p^2 )$ by 
	\begin{equation*}
	{T_n(x, y) := \left( \left(1 - \frac{1}{2n} \right)x, y \right) \ \ \ \left( (x, y) \in \ell_p^2 \right).}
	\end{equation*}
Now, define $T\in  \mathcal{L}( \ell_p )$ as 
\begin{equation*} 
{T(z) := (T_n( x(n) , y(n) ))_n = \left( \left( 1 - \frac{1}{2n}\right) x(n) , y(n) \right)_n \ \ \ \left( z = (( x(n) , y(n) ))_n \in \ell_p \right)}.
\end{equation*} 
Following \cite[Theorem 2.21.(ii)]{D}, we see that $T$ attains its norm but $T \notin \mathcal{A}_{\| \cdot\|}(\ell_p, \ell_p)$. Let us also see that $T \not\in \Anu$. Let $e_i^2$ be the unit canonical vectors of $\ell_p^2$ and $\ell_q^2$ for $i = 1, 2$, {that is, $e_1^2 = (1,0)$ and $e_2^2 = (0,1)$}. Consider 
$e_{i, n} := ((0, 0), \ldots, (0, 0), \underbrace{e_i^2}_{n \text{-th}} , (0, 0), \ldots) \in S_{\ell_p}$ and $e_{i, n}^* := ((0, 0), \ldots, (0, 0), \underbrace{e_i^2}_{n \text{-th}} , (0, 0), \ldots) \in S_{\ell_q}$ for $i = 1, 2$. Since $| \langle e_{2, n}^*, T(e_{2,n})\rangle | = 1$, $T$ attains its numerical radius and $\nu(T) = \|T\| = 1$. Suppose that $T \in \mathcal{A}_{\num} (\ell_p)$ and consider $\frac{1}{2n} < \eta(\e, T)$ for a given $\e \in (0, 1)$. Since $\nu(T) = \|e_{1, n}\|_p = \|e_{1, n}^*\|_q = \langle e_{1, n}^*, e_{1, n} \rangle = 1$ and $| \langle e_{1, n}^*, T(e_{1, n}) \rangle | > 1 - \eta(\e, T)$, there is $(w, w^*) \in \Pi(\ell_p)$ such that $| \langle w^*, T(w) \rangle | = 1$, $\|w - e_{1, n}\|_p < \e$, and $\|w^* - e_{1,n}^*\|_q < \e$. Since $\|T\| = 1$ and $| \langle w^*, T(w) \rangle | = 1$, it follows that $\|T(w)\|_p = 1$. If we denote $w = (( u(n) , v(n) ))_{ n} \in S_{\ell_p}$, then it is possible to see that $ u(j) = 0$ for all $j \in \N$. This implies that $\|w - e_{1, n}\|_p = \| ((0, v(n) ))_n - e_{1, n}\|_p \geq 1$, which is a contradiction. 
\end{example}

\begin{rem} Due to the relation between the norm of an operator and its numerical radius, it is natural to wonder whether the fact that an operator is in $\mathcal{A}_{\| \cdot \|}(X, X)$ for some Banach space $X$ implies that it also belongs to $\mathcal{A}_{\numerical}(X)$. Nevertheless, this is not the case in general even in Hilbert spaces. Indeed, on the one hand, every isometry on $X$ clearly belongs to $\mathcal{A}_{\|\cdot\|}(X, X)$. On the other hand, this does not hold for the set $\A_{\numerical} (X)$. Consider the right shift operator $R \in \mathcal{L}(\ell_2 )$. It is known that the numerical range $W(R)$ of $R$ is the open unit disk $\mathbb{D}$ in the complex plane (see, for example, \cite[Example 2]{GR}) which implies that $\nu (R) = 1$, but $| \langle Rx, x \rangle | < 1$ for every $x \in S_{\ell_2}$.
\end{rem}

Recall that a Banach space $X$ satisfies the Kadec-Klee property when the weak and norm topologies coincide on the unit sphere $S_X$. It is well-known that every locally uniformly rotund space (LUR, for short) satisfies the Kadec-Klee property (the converse is not true, e.g., $\ell_1^2$). Recall also that, by the \v{S}mulian lemma, the norm of $X$ is Fr\'echet differentiable at $x$ if and only if $(x_n^*) \subset S_{X^*}$ is convergent whenever $\lim_n \langle x_n^*, x \rangle = 1$. In the next result, under some assumptions on the involved Banach spaces, we show that some subsets of the space of all compact operators belong to the classes $\mathcal{A}_{\|\cdot\|}$ and $\mathcal{A}_{\numerical}$. We denote by $\mathcal{K}(X, Y)$ the set of all compact operators from $X$ into $Y$.

\begin{theorem} \label{KKop} Let $X$ be a reflexive space which satisfies the Kadec-Klee property. Then,
\begin{itemize}
	\item[(i)] $S_{\mathcal{K}(X, Y)} \subset \Ano$ for every Banach space $Y$.  
	\item[(ii)] $\{ T \in \mathcal{K}(X) : \nu(T) = \|T\| = 1 \} \subset \Anu$ whenever $X$ is Fr\'echet differentiable.
\end{itemize}	
\end{theorem}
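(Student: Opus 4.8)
The plan is to prove both items by the same compactness-plus-contradiction scheme: use reflexivity to extract weakly convergent subsequences, compactness of $T$ to upgrade weak convergence to norm convergence after applying $T$, and the Kadec--Klee property (together with Fr\'echet differentiability in (ii)) to upgrade weak convergence to norm convergence in the domain (respectively, in the dual).

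For (i), fix $T \in S_{\mathcal{K}(X,Y)}$. Since $X$ is reflexive and $T$ is compact, $T$ attains its norm, so it is at least a candidate for membership in $\Ano$. Suppose the Bollob\'as condition fails: there exist $\e>0$ and a sequence $(x_n) \subset S_X$ with $\|Tx_n\| \to 1$ such that $\|x_n - z\| \geq \e$ for every norming point $z$ of $T$. By reflexivity, pass to a subsequence with $x_n \rightharpoonup x \in B_X$. Compactness gives $Tx_n \to Tx$ in norm, hence $\|Tx\| = 1$; since $\|Tx\| \leq \|x\| \leq 1$, this forces $\|x\| = 1$, so $x \in S_X$ is a norming point of $T$. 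Now $x_n \rightharpoonup x$ with $x_n, x \in S_X$, so the Kadec--Klee property yields $x_n \to x$ in norm, contradicting $\|x_n - x\| \geq \e$. The choice of a maximizing sequence together with the norm-one conclusion for its weak limit is exactly what makes Kadec--Klee applicable.

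For (ii), fix $T \in \mathcal{K}(X)$ with $\nu(T) = \|T\| = 1$ and $X$ Fr\'echet differentiable. Take $(x_n, x_n^*) \in \Pi(X)$ with $|\langle x_n^*, Tx_n\rangle| \to 1$ (either from the contradiction hypothesis as above, or from the definition of $\nu$ in order to first establish attainment). By reflexivity, and since the weak and weak-$*$ topologies on $X^*$ coincide, pass to subsequences with $x_n \rightharpoonup x \in B_X$ and $x_n^* \rightharpoonup x^* \in B_{X^*}$. Compactness gives $Tx_n \to Tx$, and splitting $\langle x_n^*, Tx_n\rangle = \langle x_n^*, Tx_n - Tx\rangle + \langle x_n^*, Tx\rangle$ shows $\langle x_n^*, Tx_n\rangle \to \langle x^*, Tx\rangle$, whence $|\langle x^*, Tx\rangle| = 1$; as in (i) this forces $\|x\| = \|x^*\| = 1$. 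The Kadec--Klee property then gives $x_n \to x$ in norm, and combining $\langle x_n^*, x_n\rangle = 1$ with $|\langle x_n^*, x - x_n\rangle| \leq \|x - x_n\| \to 0$ yields $\langle x^*, x\rangle = 1$; thus $(x, x^*) \in \Pi(X)$ and $T$ attains its numerical radius at $(x,x^*)$. The same estimate gives $\langle x_n^*, x\rangle \to 1$ with $(x_n^*) \subset S_{X^*}$, so Fr\'echet differentiability of the norm at $x$ and the \v{S}mulian lemma force $(x_n^*)$ to converge in norm, necessarily to $x^*$. Both $x_n \to x$ and $x_n^* \to x^*$ in norm then contradict the assumed $\e$-separation of $(x_n, x_n^*)$ from every numerical-radius attaining pair.

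The main obstacle in both parts is ensuring that the weak limit of the maximizing sequence lands on the unit sphere (and, in (ii), that its dual companion does as well), since this is the precise hypothesis under which Kadec--Klee and \v{S}mulian convert weak convergence into norm convergence; this is handled by compactness of $T$ forcing $\|Tx\| = 1$. A secondary point requiring care in (ii) is the bookkeeping of the two convergences: one must use the \emph{norm} convergence $x_n \to x$ (not merely the weak one) both to pass $\langle x_n^*, x_n\rangle \to \langle x^*, x\rangle$ and to verify $\langle x_n^*, x\rangle \to 1$ before invoking \v{S}mulian, since a priori $x_n^*$ is only known to converge weakly.
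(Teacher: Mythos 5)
Your proof is correct, and at the level of tools it is the same proof as the paper's: argue by contradiction, extract a weakly convergent subsequence via reflexivity, use compactness of $T$ to upgrade to $T(x_n)\to T(x_0)$ in norm, conclude that the weak limit lies on $S_X$, and then invoke Kadec--Klee (and, in (ii), \v{S}mulian). For item (i) the paper merely cites \cite[Theorem 2.12]{S}; your written-out argument is exactly that argument. The one genuine divergence is in item (ii), at the point where \v{S}mulian's lemma is applied. The paper applies it at the \emph{image} point: it passes to a further subsequence so that $\langle x_n^*, T(x_0)\rangle \to e^{i\theta}$, replaces $T$ by $S:=e^{-i\theta}T$, and uses $\langle x_n^*, S(x_0)\rangle \to 1$ together with Fr\'echet differentiability at $S(x_0)\in S_X$. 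You apply it at the \emph{domain} point $x$: since Kadec--Klee has already given $x_n \to x$ in norm and $\langle x_n^*, x_n\rangle = 1$, you get $\langle x_n^*, x\rangle \to 1$ exactly, with no modulus or rotation issue, and Fr\'echet differentiability at $x$ forces $(x_n^*)$ to converge in norm, necessarily to the weak limit $x^*$ that you extracted at the outset using reflexivity of $X^*$. This is a modest but genuine simplification: it eliminates the rotation trick and the extra subsequence extraction, and having the weak limit $x^*$ in hand from the start makes the final verifications $\langle x^*, x\rangle = 1$ and $|\langle x^*, T(x)\rangle| = 1$ immediate, whereas the paper must produce its $x_0^*$ only at the \v{S}mulian step and then check these identities by passing limits through $\langle x_n^*, x_n\rangle$ and $\langle x_n^*, T(x_n)\rangle$. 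Both routes use exactly the same hypotheses, so nothing is lost by your shortcut.
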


\begin{proof} Item (i) follows from the same argument as in \cite[Theorem 2.12]{S}. Let us prove (ii). Suppose by contradiction that it is not true. Then, there are $\e_0 \in (0, 1)$ and a compact operator $T \in \mathcal{K}( X)$ with $ \nu (T) = \|T\| = 1$ such that for every $n \in \N$, there is $(x_n, x_n^*) \in \Pi(X)$ such that
	\begin{equation} \label{eq0}
	1 \geq | \langle x_n^*, T(x_n) \rangle | \geq 1 - \frac{1}{n} 
	\end{equation}
	and whenever $(x, x^*) \in \Pi(X)$ satisfies $\|x - x_n\| < \e_0$ and $\|x^* - x_n^*\| < \e_0$, we have $| \langle x^*, T(x) \rangle | < 1$. By reflexivity of $X$, there is a subsequence of $(x_n)$, which we denote again by $(x_n)$, and $x_0 \in B_X$ such that $x_n \stackrel{w}{\longrightarrow} x_0$. Thus, $T(x_n) \longrightarrow T(x_0)$ in norm. From this and $1 = \nu (T) = \|T\| \geq \|T(x_n)\| \geq | \langle x_n^*, T(x_n) \rangle | \longrightarrow 1$, we get that $\|T(x_0)\| = 1$. This shows that $x_0 \in S_X$. Since $w$ and norm topologies coincide in $S_X$, we have that $x_n \longrightarrow x_0$ in norm. Notice now that for each $n \in \N$, we have
	\begin{eqnarray*}
		1 \geq | \langle x_n^*, T(x_0) \rangle | \geq | \langle x_n^*, T(x_n) \rangle | - \|x_0 - x_n\|.	
	\end{eqnarray*}
	Since $x_n$ converges to $x_0$ in norm, by using $(\ref{eq0})$, we get that $| \langle x_n^*, T(x_0) \rangle | \longrightarrow 1$.	Thus, there exists a subsequence of $(x_n^*)$, which we denote again by $(x_n^*)$, and some $\theta\in [0, 2\pi )$ such that $ \langle x_n^*, T(x_0) \rangle $ converges to $e^{i\theta}$. Let $S\in \mathcal{K}(X)$ be the operator defined by $S:=e^{-i\theta}  T$. One clearly has that $S(x_0)\in S_X$ and $ \langle x_n^*, S(x_0) \rangle $ converges to $1$. By \v{S}mulian lemma, there is $x_0^* \in B_{X^*}$ such that $x_n^* \longrightarrow x_0^*$ in norm. Since $ \langle x_n^*, x_n \rangle = 1$ for every $n \in \N$, we get that $ \langle x_0^*, x_0 \rangle = 1$. So, $x_0^* \in S_{X^*}$ and then $(x_0, x_0^*) \in \Pi(X)$. Finally, in view of (\ref{eq0}) and $| \langle x_n^*, T(x_n) \rangle | \longrightarrow | \langle x_0^*, T(x_0) \rangle |$, we get that $| \langle x_0^*, T(x_0) \rangle | = 1$. This is a contradiction.
\end{proof}

In fact, the above argument shows, under the same assumptions on (ii), that every compact operator $T$ which has norm and numerical radius $1$ attains its numerical radius. 
Notice also that the identity operator always belongs to $\mathcal{A}_{\text{nu}} (X)$ whereas it is not compact unless $X$ is finite dimensional. So, in the infinite dimensional setting, the inclusion in Theorem \ref{KKop}.(ii) must be strict. On the other hand, since every operator from a reflexive space into a space which satisfies the Schur's property is compact and Hilbert spaces satisfy all the hypothesis of Theorem \ref{KKop}, we have the following consequence.

\begin{cor} \label{corohilbert} Let $X$ be a reflexive Banach space with the Kadec-Klee property and let $H$ be a Hilbert space. 
\begin{itemize} 	
\item[(i)] If $Y$ has the Schur property, then $ \Ano = S_{\mathcal{L}(X, Y)}$. 
\item[(ii)] If $T \in \mathcal{K}(H)$ is with $\nu(T) = \|T\| = 1$, then $T \in \AnuH$.
\end{itemize} 
\end{cor}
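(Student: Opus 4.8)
The plan is to deduce both items directly from Theorem \ref{KKop}, reducing each to a routine verification that the relevant hypotheses are satisfied.

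For item (i), the crucial observation is that every bounded linear operator from a reflexive space into a space with the Schur property is automatically compact, so that $\mathcal{L}(X, Y) = \mathcal{K}(X, Y)$. First I would recall why this holds: given a bounded sequence $(x_n) \subset X$, reflexivity provides a weakly convergent subsequence $x_{n_k} \fraca x$; since $T$ is weak-to-weak continuous, $T(x_{n_k}) \fraca T(x)$ in $Y$, and the Schur property upgrades this weak convergence to norm convergence. Hence $S_{\mathcal{L}(X, Y)} = S_{\mathcal{K}(X, Y)}$. Now Theorem \ref{KKop}.(i) gives $S_{\mathcal{K}(X, Y)} \subset \Ano$, while the reverse inclusion $\Ano \subseteq S_{\mathcal{L}(X, Y)}$ is immediate from Definition \ref{maindefinition}.(i), since every element of $\Ano$ is by definition a norm one operator. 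Combining the two inclusions yields the claimed equality.

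For item (ii), I would simply check that a Hilbert space $H$ satisfies every hypothesis of Theorem \ref{KKop}.(ii). Indeed, $H$ is reflexive; being uniformly convex, it is LUR and therefore enjoys the Kadec-Klee property; and its norm is Fr\'echet differentiable away from the origin. With all of these in place, Theorem \ref{KKop}.(ii) applied with $X = H$ gives exactly $\{ T \in \mathcal{K}(H) : \nu(T) = \|T\| = 1 \} \subset \AnuH$, which is the assertion.

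I do not anticipate any genuine obstacle here, as both parts are corollaries in the literal sense, each obtained by specializing Theorem \ref{KKop} to a setting where its hypotheses are known to hold. The only point requiring a moment's care is the compactness argument in item (i)---specifically the interplay between the weak compactness of $B_X$ coming from reflexivity and the Schur property of $Y$---but this is a standard fact, and everything else is bookkeeping.
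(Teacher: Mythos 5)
Your proposal is correct and follows exactly the route the paper intends: the paper introduces this corollary with the remark that every operator from a reflexive space into a Schur space is compact and that Hilbert spaces satisfy all hypotheses of Theorem \ref{KKop}, which is precisely your reduction. Your additional details (the weak-compactness/Schur argument for compactness, and the trivial reverse inclusion $\Ano \subseteq S_{\mathcal{L}(X,Y)}$ needed to upgrade the inclusion of Theorem \ref{KKop}.(i) to the stated equality) are accurate and only make explicit what the paper leaves implicit.
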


Next, we present a numerical radius attaining compact operator $S \notin \A_{\numerical} $ with $ \nu (S) = \|S\| = 1$ defined on a Banach space $X$ which is not reflexive, its norm is nowhere Fr\'echet differentiable, and satisfies the Schur's property (and, in particular, the Kadec-Klee property).

\begin{example} \label{ex3} Consider $c_0$ as a real space. Define the operator $T\in  \mathcal{L}(c_0)$ by
	\begin{equation*}
	(T(x) )(1) = \sum_{j=1}^{\infty} \frac{1}{2^j} x(j) \quad\text{and}\quad (T(x)) (k) = 0 \quad (k \geq 2) \ \ \ (x = (x(j))_{j=1}^{\infty} \in c_0).
	\end{equation*} 
	It is proved in \cite[Proposition 2.8]{A} that $\|T\| = \nu(T) = 1$ but $T$ attains neither its norm nor numerical radius. In particular, $T$ belongs neither to $\mathcal{A}_{\|\cdot\|}(c_0, c_0)$ nor to $\mathcal{A}_{\numerical}(c_0)$. We claim that $S := T^*$ is a compact numerical radius attaining operator with $\nu(S)=\|S\|=1$ but does not belong to $\A_{\numerical} (\ell_1)$. Indeed, first notice that $S\in  \mathcal{L}(\ell_1 )$ is given by
\begin{equation*} 	
S(y) = \sum_{j=1}^{\infty} \frac{y(1)}{2^j} e_j \ \ \ (y = (y(j))_{j=1}^{\infty} \in \ell_1).
\end{equation*} 
Moreover, $\nu(S) = \nu(T) = 1$, $\langle z, e_1 \rangle = 1$ where $z = (1,1,1, \dots ) \in S_{\ell_\infty}$, and that $\langle z, S e_1 \rangle = \sum_{j=1}^{\infty} \frac{1}{2^j} = 1$, which implies that $S$ attains the numerical radius (and the norm). Before proving that $S \notin \mathcal{A}_{\numerical} (\ell_1)$, let us first observe that $S \in \mathcal{A}_{\| \cdot \|} (\ell_1, \ell_1)$. Indeed, given $\e>0$, take $x \in S_{\ell_1}$ such that $\| S(x)\|_{1} > 1 - \frac{\eps}{2}$, that is, $\sum_{j=1}^{\infty} \frac{|x(1)|}{2^j} > 1 - \frac{\eps}{2}.$ Thus, $|x(1)| > 1-\frac{\eps}{2}$ and $\sum_{j=2}^{\infty} |x(j)| \leq \frac{\eps}{2}$. Consider $y = \left(\frac{x(1)}{|x(1)|}, 0,0,\dots\right) \in S_{\ell_1}$, then 
\begin{equation*}
\|S(y) \|_{1} = 1 \ \ \ \mbox{and} \ \ \ \| x -y \|_{1} = |x(1) - y(1)| + \sum_{j=2}^{\infty} |x(j)| \leq (1 - |x(1)|) + \frac{\eps}{2} < \eps. 
\end{equation*} 
This shows that $S \in \mathcal{A}_{\| \cdot \|} (\ell_1, \ell_1)$. 

Next, we claim that $S$ cannot be in $\mathcal{A}_{\numerical}(\ell_1)$. Indeed, observe that if $(y,z) \in \Pi(\ell_1)$ satisfy $| \langle z, S(y) \rangle | = 1$, then 
\begin{equation*}
\sum_{j=1}^{\infty} |y(j)| = 1, \ \ \ 
\sum_{j=1}^{\infty} y(j) z (j) = 1, \ \ \ 
 \left| \sum_{j=1}^{\infty} \frac{1}{2^j} y(1) z (j)\right| = 1, \ \ \ \mbox{and} \ \ \
\max_{j \in \N} |z(j)| = 1.
\end{equation*} 
From the third equality, we have 
\begin{equation*}
1 = \left| \sum_{j=1}^{\infty} \frac{1}{2^j} y(1) z (j) \right| \leq |y(1)| \sum_{j=1}^{\infty} \frac{1}{2^j} = |y(1)| \leq 1.
\end{equation*}
This implies that the only possible candidates are $y = (1, 0, 0, 0, \ldots)$ and $z = (1, 1, 1, 1, \ldots)$ or $y = (-1, 0, 0, 0, \ldots)$ and $z = (-1, -1, -1, -1, \ldots)$.	Suppose, by contradiction, that for a given $\e \in (0, 1)$, there is $\eta(\e, S) > 0$. Let $n_0 \in \N$ be such that $\sum_{j=1}^{n_0} \frac{1}{2^j} > 1 - \eta(\e, S)$. Set $y_0 = (1, 0, 0, \ldots) \in S_{\ell_1}$ and $z_0 = (1, 1, \ldots, 1, \underbrace{1}_{n_0 \text{-th}}, 0, 0, \ldots) \in S_{\ell_{\infty}}$. Then, $(y_0, z_0) \in \Pi(\ell_1)$ and $| \langle z_0, S(y_0) \rangle| = \sum_{j=1}^{n_0} \frac{1}{2^j} > 1 - \eta(\e, S)$. So, there is $(y, z) \in \Pi(\ell_1)$ such that $| \langle z, S(y) \rangle| = 1$, $\|y - y_0\|_1 < \eps$, and $\|z - z_0\|_{\infty} < \eps$. But this is not possible since $\|z - z_0\|_{\infty} \geq | z(n_0 +1) - z_0(n_0 +1)| \geq 1$.  
\end{example}

Let us recall that in Corollary \ref{corohilbert}, we proved that if a compact operator $T$ defined on a Hilbert space is such that $\nu (T) = \| T \| = 1$, then $T$ must belong to the set $\AnuH$. However, the following result (inspired by \cite[Example 1.9]{A}) provides us a wide class of operators $T\in \AnuH$ such that $1 = \nu (T) < \| T \|$ and , in particular, examples of operators which belong to $\mathcal{A}_{\numerical}$ but not to $\mathcal{A}_{\|\cdot\|}$. Notice, by item (iii) below, that $T$ belongs to the set $\mathcal{A}_{\numerical}$ in a uniform sense, that is, the $\eta$ does not depend on the operator $T$ defined there. We do not know how often this happens, that is, we do not know, for instance, whether the set of such an operators could be norming for the whole space.

\begin{prop} \label{ex4} Let $H$ be a separable infinite dimensional real Hilbert space. Then, there is $T\in  \mathcal{L} (H)$ such that
\begin{itemize}
	\item[(i)] $T$ is a compact operator.
	\item[(ii)] $1 = \nu(T) < \|T\|$ and $T$ attains its numerical radius.
	\item[(iii)] given $\eps > 0$, there is $\eta(\eps) > 0$ such that whenever $x_0 \in S_H$ satisfies
	\begin{equation*}
	|\langle Tx_0, x_0 \rangle| > 1 - \eta(\eps),
	\end{equation*}
there is $x_1 \in S_H$ such that $\nu(T) = \langle Tx_1, x_1 \rangle = 1$ and $\|x_1 - x_0\| < \eps$.
\end{itemize}
In particular, $T \in \AnuH$ and $T \not\in \mathcal{A}_{\|\cdot\|}(H, H)$.
\end{prop}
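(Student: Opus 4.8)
The plan is to construct $T$ explicitly from a single two-dimensional block that carries all of the numerical radius while forcing the norm to be strictly larger, so that the near-maximizers of the quadratic form $x \mapsto \langle Tx, x\rangle$ all cluster around one point. Fix an orthonormal basis $(e_n)$ of $H$, put $H_0 := \spann\{e_1, e_2\}$, and on $H_0$ take the rank-one operator $A$ determined by $A e_1 = a e_1$ and $A e_2 = b e_1$, where $0 < a < 1$ and $b := 2\sqrt{1-a}$ (in coordinates, the upper triangular matrix with rows $(a, b)$ and $(0,0)$). Computing the quadratic form $\langle A(s e_1 + t e_2), s e_1 + t e_2\rangle = a s^2 + b s t$ on $S_{H_0}$ shows that its range is exactly the interval $[a-1, 1]$; hence $\nu(A) = 1$ is attained at a unique point $v \in S_{H_0}$ (up to sign) with $\langle A v, v\rangle = 1$, whereas $\|A\| = \sqrt{a^2 + b^2} = 2 - a > 1$. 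I would then define $T := A \oplus R$ on $H = H_0 \oplus H_0^{\perp}$, where $R$ is any compact operator on $H_0^{\perp}$ with $\nu(R) \le \rho$ for some fixed $\rho < 1$ (for example $R = 0$); letting $R$ vary over such operators produces the announced wide class.

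Items (i) and (ii) are then essentially bookkeeping. Since $A$ has finite rank and $R$ is compact, $T$ is compact. Using that $\Pi(H) = \{(x,x) : x \in S_H\}$ (Riesz representation in the real Hilbert space) together with the splitting $\langle T(x_0, x'), (x_0, x')\rangle = \langle A x_0, x_0\rangle + \langle R x', x'\rangle$, one gets $\nu(T) = \max\{\nu(A), \nu(R)\} = 1$, attained at $v$ viewed inside $H$, while $\|T\| \ge \|A\| = 2 - a > 1$; this is exactly (ii).

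The substance is item (iii), which I would establish in two quantitative steps. The \emph{concentration} step: if $x \in S_H$ satisfies $|\langle Tx, x\rangle| > 1 - \eta$, then writing $x = x_0 + x'$ with $x_0 \in H_0$ and using $|\langle Tx, x\rangle| \le \|x_0\|^2 + \rho(1 - \|x_0\|^2)$ forces $\|x_0\|^2 > 1 - \eta/(1-\rho)$, and the same inequality gives $|\langle A u_0, u_0\rangle| > 1 - \eta/(1-\rho)$ for the normalization $u_0 := x_0 / \|x_0\|$. The \emph{within-block} step: because the numerical range of $A$ is the \emph{asymmetric} interval $[a-1, 1]$ with $a - 1 > -1$, choosing $\eta$ so small that $\eta/(1-\rho) < a$ rules out $\langle A u_0, u_0\rangle$ being close to $-1$ and forces it close to $+1$; a direct computation with the $2\times2$ quadratic form then produces a modulus $g$ with $\|u_0 - v\| \le g(\eta) \to 0$ as $\eta \to 0$ (after possibly replacing $v$ by $-v$, both giving value $+1$). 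Taking $x_1 := v$ and combining the bounds through $\|x - v\| \le (1 - \|x_0\|) + \|u_0 - v\| + (1 - \|x_0\|^2)^{1/2}$ yields $\|x_1 - x\| < \eps$ once $\eta = \eta(\eps)$ is small enough, with $\langle T x_1, x_1\rangle = 1 = \nu(T)$.

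I expect the sign bookkeeping in the within-block step to be the main obstacle: the literal requirement $\langle T x_1, x_1\rangle = 1$ (not merely $|\langle Tx_1, x_1\rangle| = 1$) fails for the symmetric nilpotent block with rows $(0,2)$ and $(0,0)$, whose numerical range $[-1,1]$ is symmetric, because a near-maximizer with $\langle Tx, x\rangle \approx -1$ sits at distance $\sqrt{2}$ from every point where the value equals $+1$; using the asymmetric block $A$ is precisely what avoids this. Uniformity of $\eta$ (the remark preceding the statement) is then transparent, since the threshold depends only on $\eps$, the fixed gap $\rho$, and the fixed modulus $g$ attached to $A$, but neither on the particular near-maximizer $x$ nor on the choice of $R$; this gives $T \in \AnuH$. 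Finally, $T \notin \AnoH$ holds trivially, as membership in $\mathcal{A}_{\|\cdot\|}$ requires norm one while $\|T\| = 2 - a > 1$.
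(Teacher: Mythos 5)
Your proof is correct, but it follows a genuinely different route from the paper's. The paper inflates the norm with a \emph{skew-symmetric} part: it pairs two infinite index sets $J_1, J_2$ via $T e_{n_k} = -\alpha_k e_{m_k}$, $T e_{m_k} = \alpha_k e_{n_k}$ with some $|\alpha_1|>1$, and places $\alpha \Id$ (with $\alpha = 1$) on a finite-dimensional block $H_3$. Since $H$ is real, the skew part contributes nothing to the quadratic form, so $\langle Tx, x\rangle = \sum_{n\in J_3} |\langle x, e_n\rangle|^2 \geq 0$: the numerical range is $[0,1]$, the attaining set is the \emph{entire} unit sphere of $H_3$, and item (iii) reduces to ``project onto $H_3$ and normalize,'' with no sign discussion at all because the form is nonnegative. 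Your construction instead uses a single non-normal rank-one $2\times 2$ block $A$ whose numerical range is the asymmetric interval $[a-1,1]$ while $\|A\| = 2-a$, direct-summed with a small compact $R$ with $\nu(R) \leq \rho < 1$. Your concentration step (forcing $\|x_0\|$ close to $1$ via the gap $1-\rho$) plays the role of the paper's nonnegativity, and your asymmetric-range argument replaces the paper's trivial sign bookkeeping: since the form on $S_{H_0}$ never drops below $a-1 > -1$, near-attainment of $|\langle Au_0, u_0\rangle|\approx 1$ forces the value to be near $+1$, after which proximity to $\{v,-v\}$ follows from the explicit $2\times 2$ computation. What each approach buys: the paper's makes (iii) a two-line projection argument, with a whole sphere of attaining points; yours is more economical (the attaining set is a single antipodal pair, and $T$ is rank one when $R=0$) and it isolates precisely the feature --- asymmetry of the numerical range --- that makes the literal requirement $\langle Tx_1, x_1\rangle = +1$, rather than merely $|\langle Tx_1, x_1\rangle| = 1$, achievable; the paper achieves the same thing by making the form nonnegative. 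Both constructions give an $\eta$ depending only on $\eps$, consistent with the uniformity remark preceding the statement, and both dispose of $T \notin \mathcal{A}_{\|\cdot\|}(H,H)$ by the same observation that $\|T\| > 1$.
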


\begin{proof} Let $0 < \alpha \leq 1$ and $\{\alpha_n\}$ be a sequence such that $|\alpha_1| > 1$, $-1< \alpha_n < 1$ for $n \geq 2$, and $\alpha_n \rightarrow 0$ as $n \rightarrow \infty$. Let $\{J_1, J_2, J_3\}$ be a partition of $\mathbb{N}$ such that $|J_1|=|J_2|=\aleph_0$, $|J_3|=\ell<\infty$. Write the subsets $J_1$, $J_2$ as $J_1 = \{n_k : k \geq 1\}$, $J_2 = \{m_k : k \geq 1\}$ where $n_1 \leq n_2 \leq \dots$, $m_1 \leq m_2 \leq \dots$ and each $n_k$ corresponds to $m_k$ via an one-to-one correspondence between $J_1$ and $J_2$. Define $T \in  \mathcal{L}(H)$ by 
\begin{align*}
T(e_{n_k}) = -\alpha_k e_{m_k}~ ({k \in \mathbb{N}}), \quad 
T(e_{m_k}) =\alpha_k e_{n_k} ~ ({k \in \mathbb{N}}),\quad 
T(e_n) = \alpha e_n ~ (n \in J_3),
\end{align*} 
where $\{e_n : n \geq 1\}$ is an orthonormal basis of $H$. Note first that for every $x \in H$, we have
\begin{equation*}
T(x) = \sum_{n=1}^{\infty} \langle x, e_n \rangle T(e_n)
= \sum_{k \in \mathbb{N}} \left(-\alpha_k \langle x, e_{n_k} \rangle e_{m_k}+ \alpha_k \langle x, e_{m_k}\rangle e_{n_k}\right) + \sum_{n \in J_3} \alpha \langle x, e_n\rangle e_n.
\end{equation*}  	
The item (i) is clear. Let us calculate the norm and numerical radius of $T$. Note for each $x \in S_H$, we have
\begin{align*}
\langle T(x), x\rangle = \sum_{k \in \mathbb{N}} \left(\alpha_k \langle e_{n_k}, x \rangle \langle x, e_{m_k}\rangle - \alpha_k \langle e_{m_k}, x \rangle \langle x, e_{n_k}\rangle\right) + \sum_{n \in J_3} \alpha \langle e_n, x\rangle \langle x, e_n\rangle.
\end{align*} 
The first two terms are canceled out because $H$ is real and then 
\begin{equation}\label{eq2}
\langle T(x), x\rangle= \alpha \sum_{n \in J_3} |\langle x, e_n\rangle |^2
\end{equation}
for $x \in S_H$ which implies that $ \nu (T) \leq \alpha$. Since $|\langle T e_n, e_n\rangle| = \alpha$ for every $n \in J_3$, we have that $T$ attains its numerical radius and $ \nu (T) = \alpha$.
 	
On the other hand, let us notice that, for every $x \in H$, we have 
\begin{align*}
\|T(x)\|^2 = \sum_{j=1}^{\infty} |\langle T(x), e_j \rangle |^2 
&= \sum_{k \in \mathbb{N}} \left( |\alpha_k \langle x, e_{m_k} \rangle|^2 + |\alpha_k \langle x, e_{n_k} \rangle |^2\right) + \sum_{n \in J_3} | \alpha \langle x, e_n\rangle |^2. 
\end{align*} 
It follows that $\| T\| \leq \max \{ \|\{\alpha_n\}\|_{\infty}, |\alpha|\}$. However, we also have 
\begin{align*}
\|T \| &\geq \sup\{\|T(e_n) \| : n \geq 1\} = \sup \{ |\alpha_k|, |\alpha| : k \geq 1\} = \max \{ \| \{\alpha_n\} \|_{\infty}, |\alpha| \};
\end{align*} 
hence $\|T \| = \max \{ \| \{\alpha_n\} \|_{\infty}, |\alpha| \}$. In particular, since $|\alpha_1| > 1$, we have $\|T\| > 1 \geq \alpha = \nu (T)$. This proves item (ii).
 	
Now we prove that $T \in \AnuH$ when $\alpha=1$. Given $\e \in (0, 1)$, let $x_0 \in S_H$ be such that $|\langle T(x_0), x_0 \rangle| > 1 - \frac{  \eps^2}{4}.$ By equation (\ref{eq2}), we have that 
\begin{equation*}
\sum_{n \in J_3} |\langle x_0, e_n\rangle |^2 = |\langle T(x_0), x_0 \rangle| > 1 - \frac{  \eps^2}{4}, \ \mbox{and then} \ \sum_{k \in J_1 \cup J_2 } |\langle x_0, e_k \rangle|^2 < \frac{\eps^2}{4}. 
\end{equation*}
Let $\pi_{3}$ be the projection of $H$ onto the closed subspace $H_3 = \text{span} \{ e_n : n \in J_3 \}$. Then we have $\pi_3 (x_0) = \sum_{n \in J_3} \langle x_0, e_n \rangle e_n$ and 
\begin{equation*}
\langle T(\pi_3(x_0)), \pi_3(x_0)\rangle =  \sum_{n\in J_3} |\langle \pi_3(x_0), e_n \rangle |^2 = \sum_{n \in J_3} |\langle x_0 , e_n \rangle |^2. 
\end{equation*}
It follows that $T$ attains its numerical radius at $\| \pi_3 (x_0) \|^{-1} \pi_3 (x_0) \in S_H$. Moreover,
\begin{align*}
\left\| \frac{\pi_3 (x_0) }{\|\pi_3 (x_0)\| }- x_0 \right\| &\leq \left\| \frac{\pi_3 (x_0) }{\|\pi_3 (x_0)\| }- \pi_3 (x_0) \right\| + \left\| \pi_3 (x_0) - x_0 \right\| \\
&\leq | 1 - \| \pi_3 (x_0) \|| + \left( \sum_{k \in J_1 \cup J_2} |\langle x_0, e_k\rangle|^2 \right)^{1/2} < \frac{\eps}{2} + \frac{\eps}{2} = \eps.
\end{align*}
\end{proof}

Observe that it is not true that $T^*$ belongs to $\A_{\|\cdot\|}$ if $T$ belongs to $\A_{\|\cdot\|}$ in general (see Examples \ref{ex3} and \ref{ex6}). However, if we put some extra assumptions on the spaces $X$ and $Y$, then we can obtain the following duality results. 

\begin{prop} \label{adjoint} Let $X,Y$ be Banach spaces and $T \in \mathcal{L} (X,Y)$. 
	\begin{itemize}
		\item[(i)] Suppose that $Y$ be uniformly smooth. If $T \in \Ano$, then $T^* \in \Anoad$.
		\item[(ii)] Suppose that $X$ be uniformly convex. If $T^* \in \Anoad$, then $T \in \Ano$.
		\item[(iii)] Suppose that $X$ is reflexive. Then, $T \in \Anu$ if and only if $T^* \in \Anuad$. 
	\end{itemize}
\end{prop}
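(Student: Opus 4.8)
The plan is to run all three parts through the standard duality dictionary, keeping in mind that $\|T\|=\|T^*\|$ (and, for (iii), $\nu(T)=\nu(T^*)$), so that only the Bollob\'as-type clause requires genuine work. Concretely I would use that $Y$ uniformly smooth is equivalent to $Y^*$ uniformly convex, that $X$ uniformly convex is equivalent to $X^*$ uniformly smooth, and that reflexivity of $X$ identifies $X^{**}$ with $X$.

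For (i), the real input is a quantitative smoothness lemma: since $Y$ is uniformly smooth, $Y^*$ is uniformly convex, so for every $\e>0$ there is $\delta>0$ such that for all $w\in S_Y$ and $\phi\in S_{Y^*}$ with $\re\phi(w)>1-\delta$ one has $\|\phi-J(w)\|<\e$, where $J(w)$ is the unique support functional at $w$ (uniqueness by smoothness). This follows by averaging $\phi$ with $J(w)$ and applying the modulus of convexity of $Y^*$. Granting this, given $\e>0$ I take $y^*\in S_{Y^*}$ with $\|T^*y^*\|>1-\eta'$ and pick $x\in S_X$ with $|y^*(Tx)|$ within $\eta'$ of $\|T^*y^*\|$, so $\|Tx\|>1-2\eta'$; then $T\in\Ano$ yields $x_0\in S_X$ with $\|Tx_0\|=1$ and $\|x_0-x\|<\e_1$. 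Writing $w:=Tx_0\in S_Y$ and choosing a unimodular $\alpha$ with $\alpha y^*(w)=|y^*(w)|$, the bound $|y^*(w)|\geq|y^*(Tx)|-\|x_0-x\|>1-2\eta'-\e_1$ lets the lemma apply to $\alpha y^*$ and $w$, giving $\|\alpha y^*-J(w)\|<\e$. Setting $y_0^*:=\bar\alpha J(w)$ yields $\|y_0^*-y^*\|<\e$ and $\|T^*y_0^*\|\geq|y_0^*(Tx_0)|=|J(w)(w)|=1$, so $T^*$ attains its norm at $y_0^*$; that $T^*$ attains its norm at all comes from the same construction started from a norming point of $T$. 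Fixing $\e_1$ and $\eta'$ small relative to $\delta(\e)$ and $\eta(\e_1,T)$ closes the argument.

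For (ii) I would argue directly with the modulus of uniform convexity of $X$. Uniform convexity gives reflexivity, hence $T$ attains its norm (the weakly continuous map $x\mapsto|y^*(Tx)|$ attains its maximum on the weakly compact $B_X$, at a $y^*$ norming $T^*$). Given $\e>0$ and $x\in S_X$ with $\|Tx\|>1-\eta$, I choose $y^*\in S_{Y^*}$ with $y^*(Tx)=\|Tx\|$, so $\|T^*y^*\|>1-\eta$, and $T^*\in\Anoad$ provides $y_0^*\in S_{Y^*}$ with $\|T^*y_0^*\|=1$ and $\|y_0^*-y^*\|<\e_1$. The functional $\phi:=T^*y_0^*\in S_{X^*}$ attains its norm at some $x_1\in S_X$ (reflexivity), normalized by a unimodular rotation so that $\phi(x_1)=1$ and $\|Tx_1\|=1$, and satisfies $\re\phi(x)\geq(1-\eta)-\e_1$. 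Averaging gives $\|\tfrac{x+x_1}{2}\|>1-\tfrac{\eta+\e_1}{2}$, so uniform convexity forces $\|x-x_1\|<\e$ provided $\eta+\e_1\leq 2\delta_X(\e)$; I take $x_0:=x_1$. Alternatively, since $X^*$ is uniformly smooth, one may apply (i) to $T^*:Y^*\to X^*$ to obtain $T^{**}\in\mathcal{A}_{\|\cdot\|}(X^{**},Y^{**})$ and then identify $T^{**}$ with $T$ via $X^{**}=X$, noting that the defining property of $\mathcal{A}_{\|\cdot\|}$ only involves the domain and the image norms and is thus unchanged when $T$ is viewed as mapping into $Y^{**}$.

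For (iii), reflexivity makes everything symmetric. I would set up the bijection $\Pi(X)\to\Pi(X^*)$, $(x,x^*)\mapsto(x^*,x)$, which is well defined precisely because $X^{**}=X$, together with the identity $\langle x^*,Tx\rangle=\langle T^*x^*,x\rangle=\langle x,T^*x^*\rangle$. Hence $|x^*(Tx)|=|x(T^*x^*)|$ along corresponding states, giving $\nu(T)=\nu(T^*)$ and the equivalence of numerical radius attainment. Since the two distance requirements $\|x_0-x\|<\e$ and $\|x_0^*-x^*\|<\e$ in the definition of $\Anu$ are simply interchanged by the bijection, the whole Bollob\'as clause for $T$ transfers verbatim to $T^*$, so $T\in\Anu\iff T^*\in\Anuad$. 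The main obstacle throughout is not conceptual but the bookkeeping of constants in (i) and (ii): the sole analytic ingredient is the \emph{uniform} form of smoothness/convexity (a $\delta(\e)$ independent of the base point), and the delicate step is to chain it with the operator-dependent $\eta(\cdot,T)$ so that the resulting $\eta(\e,T^*)$, respectively $\eta(\e,T)$, depends only on $\e$ and the operator; part (iii) needs no analysis beyond the reflexive identification.
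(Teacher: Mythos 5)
Your proof is correct, and for parts (i) and (iii) it is essentially the paper's own argument: your part (i) is the same averaging trick against the modulus of convexity $\delta_{Y^*}$ (your lemma giving $\|\varphi - J(w)\|<\varepsilon$ whenever $\re \varphi(w)>1-\delta$ is exactly what the paper does inline, by picking $y_2^*$ norming $T(x_2)$ and estimating $\|y_1^*+y_2^*\|$), and your part (iii) is the same transfer through the canonical identification of $\Pi(X^*)$ with $\Pi(X)$ under reflexivity, with $\eta(\varepsilon,T^*):=\eta(\varepsilon,T)$. The one genuine difference is part (ii): the paper disposes of it in a single line by applying (i) to $T^*$ (since $X$ uniformly convex makes $X^*$ uniformly smooth and $X$ reflexive, so $T^{**}\in\mathcal{A}_{\|\cdot\|}(X^{**},Y^{**})$ is identified with $T$), which is precisely your ``alternative'' route; your primary argument is instead self-contained and works directly with $\delta_X$: pull back a norming point $y_0^*$ of $T^*$, use reflexivity to get a (rotated) norming point $x_1$ of $\varphi=T^*y_0^*$, and force $\|x-x_1\|<\varepsilon$ from $\|\tfrac{x+x_1}{2}\|>1-\tfrac{\eta+\varepsilon_1}{2}$ with $\eta+\varepsilon_1\leq 2\delta_X(\varepsilon)$. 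Both are sound; the paper's reduction is shorter, while your direct version avoids the bidual bookkeeping (identifying $T^{**}$ with $T$ and checking that membership in $\mathcal{A}_{\|\cdot\|}$ depends only on the norms $\|Tx\|$) and produces the nearby norming point explicitly. Incidentally, your (i) quietly repairs a small imprecision in the paper: the paper asserts the existence of $x_1\in S_X$ with $\re\langle x_1, T^*(y_1^*)\rangle = \|T^*(y_1^*)\|$, which need not exist since $T^*y_1^*$ may fail to attain its norm ($X$ is not assumed reflexive in (i)); your choice of $x$ with $|y^*(Tx)|$ only within $\eta'$ of $\|T^*y^*\|$ is the correct fix and costs nothing in the estimates.
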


\begin{proof} Note that (ii) is just a consequence of (i) since, in this case, $X$ is, in particular, reflexive. Let us prove (i). Let $Y$ be a uniformly smooth Banach space. Let $T \in \Ano$. Then, $\|T^*\| = \|T\| = 1$ and $T^*$ is also norm-attaining. In order to prove that $T^* \in \Anoad$, let $\e \in (0, 1)$ be given and consider $\eta(\e, T) > 0$. Set
	\begin{equation*}
	\eta(\e, T^*) := \min \left\{ \eta \left( \frac{\delta_{Y^*}(\e)}{2}, T \right), \frac{\delta_{Y^*}(\e)}{2} \right\} > 0,
	\end{equation*}
	where $\e \mapsto \delta_{Y^*}(\e)$ stands for the modulus of convexity of $Y^*$. Pick $y_1^* \in S_{Y^*}$ to satisfy $\|T^*(y_1^*)\| > 1 - \eta(\e, T^*)$. There is $x_1 \in S_X$ such that $\re \langle y_1^*, T(x_1) \rangle = \re \langle x_1, T^* (y_1^*) \rangle = \|T^*(y_1^*)\| > 1 - \eta(\e, T^*)$.	This implies that $\|T(x_1)\| > 1 - \eta(\e, T^*)$. Since $T \in \Ano$, there is $x_2 \in S_X$ such that
$\|T(x_2)\| = 1$ and $\|x_2 - x_1\| < \frac{\delta_{Y^*} (\e)}{2}$.
	Take $y_2^* \in S_{Y^*}$ to be such that $\re \langle y_2^*, T(x_2) \rangle = \|T(x_2)\| = 1$ and notice that $\re \langle y_1^*, T(x_2) \rangle > 1 - \delta_{Y^*}(\e)$. Then, $\|y_1^* + y_2^*\| > 2 - 2 \delta_{Y^*}(\eps)$. This shows that $\|y_2^* - y_1^*\| < \e$. As $T^*$ attains its norm at $y_2^*$ which is close to $y_1^*$, henceforth, $T^* \in \Anoad$.
	
Now we prove (iii). Since $X$ is reflexive, we just have to prove one direction. Assume $T \in \Anu$. Note that $T^* \in \mathcal{L}(X^*)$ also attains its numerical radius. Now let $\e > 0$ be given and set $\eta(\e, T^*) := \eta(\e, T) > 0$. Let $(x_1^*, x_1^{**}) \in \Pi(X^*)$ be such that $| \langle x_1^{**}, T^*(x_1^*) \rangle | > 1 - \eta(\e, T^*)$. Since $X$ is reflexive, there is $x_1 \in S_X$ such that $x_1 = x_1^{**}$. Then
	\begin{equation*}
	| \langle x_1^*, T(x_1) \rangle | = | \langle x_1, T^*(x_1^*) \rangle | = | \langle x_1^{**}, T^*(x_1^*) \rangle | > 1 - \eta(\e, T^*) = 1 - \eta(\e, T).
	\end{equation*}
	Then there is $(x_2, x_2^*) \in \Pi(X)$ such that $| \langle x_2^*, T(x_2) \rangle | = 1$, $\|x_2 - x_1\| < \e$ and $\|x_2^* - x_1^*\| < \e$. So, $T^* \in \Anuad$ as desired.
\end{proof}

Given $T\in \mathcal{L}(c_0 )$ and $N \in \N$, it is not difficult to see that $\operatorname{ran} T^*\subset \operatorname{span}\{e_1^*, \ldots, e_N^*\}$ if and only if $T=T\circ P_N$, where $P_N$ is the natural $N$-th projection on $c_0$. A property related to Proposition \ref{adjoint}.(iii) above can be proved for $c_0$ under this condition.

\begin{prop} \label{prop:AnuAdjoint-c0} Let $T\in \mathcal{A}_{\text{nu}}(c_0)$ be an operator such that the range of $T^* \in \mathcal{L} (\ell_1)$ is in $\operatorname{span}\{ e_1^*, \ldots, e^*_N \}$ for some $N\in\mathbb{N}$. Then, $T^*\in \mathcal{A}_{\text{nu}}(\ell_1)$
\end{prop}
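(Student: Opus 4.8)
The plan is to transfer the Bollob\'as property from $T$ to $T^*$ by a forward--backward scheme, exploiting that the hypothesis on the range of $T^*$ is exactly $T = T\circ P_N$, so that for every $y\in\ell_1$ and every $y^*\in\ell_\infty$
\begin{equation*}
\langle y^*, T^*y\rangle = \langle y, T\tilde y^*\rangle, \qquad \tilde y^* := \sum_{j=1}^N y^*(j)\,e_j\in c_0 .
\end{equation*}
In particular the numerical value $\langle y^*, T^*y\rangle$ depends on $y^*$ only through its first $N$ coordinates. I will also use repeatedly the elementary fact that a norming functional of $w\in S_{c_0}$ is supported on the finite set $\{i:|w(i)|=1\}$; hence for every state $(x,x^*)\in\Pi(c_0)$ the functional $x^*$ is \emph{finitely supported}.

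First I would settle the value and attainment of the numerical radius. The involution $(x,x^*)\mapsto(x^*,x)$ sends $\Pi(c_0)$ into $\Pi(\ell_1)$ and preserves the modulus of the numerical value, since $|\langle x^*, Tx\rangle| = |\langle x, T^*x^*\rangle|$. As $T$ attains its numerical radius and $\nu(T^*)=\nu(T)=1$ (the equality of the numerical radii of an operator and its adjoint, already invoked for the operator of Example~\ref{ex3}), this at once gives $\nu(T^*)=1$ together with a point of $\Pi(\ell_1)$ at which $T^*$ attains it.

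For the Bollob\'as property, given $\e\in(0,1)$ I would set $\eta(\e,T^*):=\tfrac12\,\eta(\e/2,T)$ and, for a state $(y,y^*)\in\Pi(\ell_1)$ with $|\langle y^*,T^*y\rangle|>1-\eta(\e,T^*)$, proceed in three steps. \emph{(Forward.)} Since $y\in\ell_1$, pick $m\ge N$ with $\sum_{i>m}|y(i)|<\delta$ (here $\delta$ is small, depending only on $\e$ and $\|T\|$) and let $\hat y$ be the renormalised truncation of $y$ to $\{1,\dots,m\}$; define $x\in S_{c_0}$ by $x(i)=y^*(i)$ for $i\le N$, $x(i)=\sgn(y(i))$ for $N<i\le m$ and $x(i)=0$ otherwise. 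One checks $(x,\hat y)\in\Pi(c_0)$ and, via the displayed identity together with $\|\hat y-y\|_1<2\delta$, that $|\langle \hat y, Tx\rangle|>1-\eta(\e/2,T)$. \emph{(Bollob\'as for $T$.)} This yields $(x_1,x_1^*)\in\Pi(c_0)$ with $|\langle x_1^*,Tx_1\rangle|=1$, $\|x_1-x\|_\infty<\e/2$ and $\|x_1^*-\hat y\|_1<\e/2$. \emph{(Backward.)} Set $y_0:=x_1^*$, which is finitely supported, and define $y_0^*$ to equal $x_1$ on $\{1,\dots,N\}\cup\supp(x_1^*)$ and to equal $y^*$ on the remaining coordinates. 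A direct verification --- using that $x_1^*$ norms $x_1$, so that $\langle x_1,x_1^*\rangle=1$ and $\supp(x_1^*)\subseteq\{i:|x_1(i)|=1\}$ --- shows $(y_0,y_0^*)\in\Pi(\ell_1)$, that $\langle y_0^*,T^*y_0\rangle=\langle x_1^*,Tx_1\rangle$ has modulus $1$, and that $\|y_0-y\|_1<\e$.

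The step I expect to be the main obstacle is the closeness $\|y_0^*-y^*\|_\infty<\e$ of the dual components, because $y^*$ may carry modulus-one entries along an infinite support (namely on $\supp(y)$), which no vector of $c_0$ can track; this is precisely the feature that breaks the na\"ive adjoint correspondence in the non-reflexive setting, and it is where the finite-range hypothesis is essential. The resolution is twofold: the value only sees the first $N$ coordinates of $y_0^*$, so the tail of $y_0^*$ may be left equal to that of $y^*$; and the Bollob\'as functional satisfies $\supp(x_1^*)\cap\{i>N\}\subseteq\supp(\hat y)$ --- indeed $|x_1(i)|=1$ with $i>N$ forces $|x(i)|>1-\e/2$, whence $x(i)=\pm1$ and $i\in\supp(\hat y)$ by construction of $x$ --- so that on the only coordinates beyond $N$ where $y_0^*$ is forced to change one already has $x(i)=y^*(i)$, keeping the change below $\e/2$. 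Verifying this containment carefully, and choosing the constants $\delta$ and $\e/2$ so that both closeness estimates hold simultaneously while $\eta$ depends on $T^*$ and $\e$ only (note that $m$ is allowed to depend on the state), is where the remaining work lies.
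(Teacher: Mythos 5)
Your proof is correct and follows essentially the same route as the paper's: truncate and renormalise the $\ell_1$ component, flip the state into $\Pi(c_0)$ (using that the numerical value of $T^*$ only sees the first $N$ coordinates of the $\ell_\infty$ component, since $T=T\circ P_N$), apply the Bollob\'as property of $T$, and patch the resulting state back into $\Pi(\ell_1)$ keeping the tail of the original $\ell_\infty$ vector, which is exactly the paper's scheme. The only caveat is that in the complex case $\sgn(y(i))$ must be replaced by $\overline{\sgn(y(i))}$ (equivalently by $y^*(i)$, which is what the paper uses on all of $\{1,\dots,n_0\}$) so that $(x,\hat y)\in\Pi(c_0)$ holds; this is a cosmetic fix.
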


\begin{proof}
	Let $\varepsilon > 0$. Set $\eta(\varepsilon, T^*):= \min \{ \frac{\varepsilon}{3}, {\eta\left( \frac{\varepsilon}{3}, T \right)} \} > 0$. Let $(x_1^*, x_1^{**})\in \Pi(\ell_1)$ be such that $|\langle x_1^{**}, T^*(x_1^*)\rangle| > 1 - \eta(\varepsilon, T^*)$.	Let $n_0>N$ be big enough so that $\sum_{n=1}^{n_0}|x_1^*(n)|>1-\eta(\varepsilon, T^*)$.
	Define $(x_2^*, x_2^{**}) \in \ell_1 \times \ell_\infty$ as follows:
	\begin{enumerate}
		\item[(a)] $x_2^{*} (n) = (\sum_{n=1}^{n_0} |x_1^* (n)|)^{-1} x_1^* (n)$ for $1 \leq n \leq n_0$ and $x_2^* (n) = 0 $ for $n > n_0$, 
		\item[(b)] $x_2^{**}(n) = x_1^{**}(n)$ for $1\leq n \leq n_0$ and $x_2^{**}(n)=0$ for $n>n_0$.
	\end{enumerate} 
	As $x_1^* (n) x_1^{**} (n) = |x_1^* (n)|$ for every $n \in \N$, we get that $(x_2^*, x_2^{**}) \in \Pi (\ell_1)$. Note that $\| x_2^* - x_1^* \| < 2\eta(\eps, T^*) < \frac{2\eps}{3}$. Now, 
	\begin{align*}
	| \langle x_2^*, T (x_2^{**} ) \rangle | = |\langle x_2^{**}, T^* (x_2^*) \rangle | &= \left| \sum_{n=1}^{N} x_2^{**} (n) (T^{*} (x_2^{*})) (n) \right| \\
	&= \left(\sum_{n=1}^{n_0} |x_1^* (n)| \right)^{-1} \left| \sum_{n=1}^{N} x_1^{**} (n) (T^{*} (x_1^{*})) (n) \right| > 1- \eta(\eps, T^*).
	\end{align*} 
	Hence, there exists $(x_3, x_3^*) \in \Pi (c_0)$ such that $|\langle x_3^* , T x_3 \rangle | = 1$, $\| x_3 - x_2^{**}\| < \frac{\eps}{3}$, and $\|x_3^* - x_2^* \| < \frac{\eps}{3}$. Notice that $|x_3 (n) | < \frac{\eps}{3}$ for every $n > n_0$; hence $x_3^* (n) = 0$ for every $n > n_0$. Define $x_3^{**} \in B_{\ell_\infty}$ by $x_3^{**} (n) = x_3 (n)$ for $1 \leq n \leq n_0$ and $x_3^{**} (n) = x_1^{**} (n)$ for $n > n_0$. Then, $(x_3^*, x_3^{**}) \in \Pi (\ell_1)$, $\|x_3^* - x_1^* \| < \eps$, and $\|x_3^{**} - x_1^{**} \| < \frac{\eps}{3}$. Finally, 
\begin{equation*} 
	\left| \langle x_3^{**}, T^* (x_3^*) \rangle \right| = \left| \sum_{n=1}^N x_3^{**} (n) (T^* (x_3^*))(n) \right| = \left| \sum_{n=1}^N x_3 (n) (T^* (x_3^*))(n) \right| = 1.
\end{equation*} 	
\end{proof}

In Proposition \ref{adjoint}, if we drop off some of the hypothesis, then it is possible to construct  operators which do not satisfy the conclusion of that result. Recall that, in Example \ref{ex3}, we have constructed an operator $T$ on $c_0$ such that $T^* \in \mathcal{A}_{\|\cdot\|} (\ell_1, \ell_1 )$ but $T \notin \mathcal{A}_{\|\cdot\|} (c_0, c_0 )$. Next, we present an operator $S$ such that $S \in \mathcal{A}_{\|\cdot\|} (X, X)$ but $S^* \notin \mathcal{A}_{\|\cdot\|} (X^{ * }, X^{ * })$.

\begin{example}\label{ex6} The operator $T$ defined in Example \ref{ex3} is such that $T^{**} \not\in \mathcal{A}_{\|\cdot\|} (\ell_{\infty}, \ell_{\infty})$ although $T^* \in \mathcal{A}_{\|\cdot\|} (\ell_1, \ell_1)$ . Indeed, $T^{**} \in \mathcal{L}(\ell_{\infty})$ is given by 
	\begin{equation*}
	(T^{**}(z))(1) = \sum_{j=1}^{\infty} \frac{1}{2^j} z(j) \ \ \ \mbox{and} \ \ \ (T^{**}(z))(k) = 0 \ \forall \ k \geq 2	
	\end{equation*}
for $ z \in \ell_{\infty} $. Then, for the vector $u_0 = (1, 1, 1, 1, \ldots) \in S_{\ell_{\infty}}$, we have $\|T^{**} (u_0)\| = 1 = \|T^{**}\|$. Let $z_0 \in S_{\ell_{\infty}}$ be such that $\|T^{**} (z_0)\|_{\infty} = 1$. This implies that $|z_0(j)| = 1$ for all $j \in \N$. For a given $\eps \in (0, 1)$, suppose that there is $\eta(\eps, T^{**}) > 0$. Let $n_0 \in \mathbb{N}$ be such that $2^n \eta(\eps, T^{**}) > 1$ for every $n \geq n_0$. Consider the vector $z \in S_{\ell_{\infty}}$ defined as $z_1 (n) = 1$ for $1 \leq n \leq n_0$ and $z_1 (n) = 0$, otherwise. Then, 
	$\| T^{**} (z_1) \| = \sum_{j=1}^{n_0} \frac{1}{2^j}  > 1 - \eta (\eps, T^{**}). $
	However, the vector $z_1$ cannot be close to norming points of $T^{**}$ by definition. This shows that $T^{**} \not\in \mathcal{A}_{\| . \|} (\ell_{\infty}, \ell_{\infty})$.
\end{example}

%\textcolor{red}{We lack $T\in \mathcal{A}_{\numerical}$ with $T^*\notin \mathcal{A}_{\numerical}$.}

Our next aim is to characterize the diagonal operators which belong to $\mathcal{A}_{\|\cdot\|}$ and $\mathcal{A}_{\numerical}$. We give a complete characterization for these operators which belong to $\mathcal{A}_{\|\cdot\|}(X, X)$ whenever $X=c_0$ or $\ell_p$ with $1 \leq p \leq \infty$ and for $\mathcal{A}_{\numerical}(X)$ whenever $X=c_0$ or $\ell_p$ with $1 \leq p < \infty$. Next lemma describes the norm-attaining diagonal operators defined on $c_0$ or $\ell_p$. Although it might be well-known in the literature, we present a short proof of it for the sake of completeness and we use it to prove Theorem \ref{theo:diag_norm}.

\begin{lemma}\label{lem:diag_norm}
Let $X=c_0$ or $\ell_p$ with $1\leq p\leq\infty$. Let $T \in  \mathcal{L}(X)$ be a norm one operator defined as 
$$Tx = (\alpha_n x(n))_{n=1}^{\infty} \quad (x = (x(n))_{n=1}^\infty \in X),$$
where $(\alpha_n )_{n=1}^{ \infty}$ is a bounded sequence of complex numbers. Given $x \in S_{X}$, $T$ attains its norm at $x$ if and only if the following is satisfied:
\begin{itemize}
\item[(i)] Case $X=c_0$: there exists $n_0\in\mathbb{N}$ such that $|\alpha_{n_0}|=\| T \|$ and $|x(n_0)|=1$.
\item[(ii)] Case $X=\ell_\infty$: either the same condition as in $c_0$ holds or there exists a subsequence of the natural numbers, $(n_k)_{k=1}^{\infty}$, such that $|\alpha_{n_k}|$ converges to $\| T\|$ and $|x(n_k)|$ converges to $1$ as $k\rightarrow \infty$.
\item[(iii)] Case $X=\ell_p$ with $1\leq p < \infty$: setting $J=\{ n\in \mathbb{N}:\, |\alpha_n|=1 \}$, $J$ is non-empty and $x(n)=0$ for all $n\in \mathbb{N}\backslash J$.
\end{itemize}
\end{lemma}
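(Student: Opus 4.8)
The plan is to begin by normalizing: in each of the three cases the operator norm equals the supremum of the diagonal, $\|T\| = \sup_n |\alpha_n| = 1$. Indeed, the upper bound $\|Tx\| \le \sup_n|\alpha_n|\,\|x\|$ is immediate from the definition, while testing $T$ on the canonical vectors gives $\|Te_n\| = |\alpha_n|$, so $\sup_n|\alpha_n| \le \|T\|$. Consequently $|\alpha_n| \le 1$ for every $n$, and each assertion reduces to deciding, for a fixed $x \in S_X$, exactly when the inequality $\|Tx\| \le \|T\| = 1$ becomes an equality.

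For $X = c_0$ in (i), I would write $\|Tx\|_\infty = \sup_n |\alpha_n x(n)|$ and use that $x \in c_0$ together with $|\alpha_n| \le 1$ to get $|\alpha_n x(n)| \le |x(n)| \to 0$; hence the supremum is attained at some index $n_0$. Norm attainment at $x$ is then equivalent to $|\alpha_{n_0} x(n_0)| = 1$, and since both factors are at most $1$, this forces $|\alpha_{n_0}| = |x(n_0)| = 1$, with the converse being trivial. For $X = \ell_\infty$ in (ii) the same product estimate holds, but now $x(n)$ need not tend to $0$, so $\sup_n |\alpha_n x(n)|$ may fail to be attained. I would therefore split into the two branches of the statement: if the supremum is attained, the $c_0$-type condition reappears; otherwise there is a sequence of indices $(n_k)$ with $|\alpha_{n_k} x(n_k)| \to 1$, and because both factors lie in $[0,1]$, a product converging to $1$ forces $|\alpha_{n_k}| \to 1$ and $|x(n_k)| \to 1$ separately. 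In each branch the converse is just the lower bound $\|Tx\|_\infty \ge |\alpha_{n_k} x(n_k)| \to 1$.

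For $X = \ell_p$ with $1 \le p < \infty$ in (iii), I would compute $\|Tx\|_p^p = \sum_n |\alpha_n|^p |x(n)|^p$ and subtract it from $\|x\|_p^p = 1$ to obtain the nonnegative expression $1 - \|Tx\|_p^p = \sum_n (1 - |\alpha_n|^p)|x(n)|^p$. Norm attainment at $x$ is equivalent to this sum vanishing, and since all summands are nonnegative, this happens precisely when $x(n) = 0$ for every $n$ with $|\alpha_n| < 1$, i.e.\ when $x$ is supported on $J = \{n : |\alpha_n| = 1\}$; the existence of a unit vector with this support forces $J \neq \emptyset$. The only step requiring genuine care is the $\ell_\infty$ case: the possible failure of the supremum to be attained is exactly what makes the subsequence branch of the statement necessary, and one must check the elementary fact that a product of two $[0,1]$-valued sequences tending to $1$ forces each factor to tend to $1$. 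Everything else is a routine equality-case analysis of the standard norm inequalities.
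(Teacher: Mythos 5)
Your proof is correct and takes essentially the same approach as the paper's: both reduce to $\|T\|=\sup_n|\alpha_n|=1$ and then analyze the equality cases of the standard norm estimates (the supremum being attained or only approached along a subsequence for $c_0$ and $\ell_\infty$, and the vanishing of $\sum_n (1-|\alpha_n|^p)|x(n)|^p$ for $\ell_p$). The paper's proof is just terser, leaving implicit the equality-case details that you spell out.
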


\begin{proof}
In all cases, it is easy to prove that $\|T \| = \sup_{n\in\N} |\alpha_n|$ and the implication $(\Leftarrow)$ is clear. Conversely, the proofs for $X=c_0$ and $X=\ell_\infty$ are a consequence of the fact that 
$$1=\|T \| = \|Tx\| = \sup_{n\in\N} |\alpha_n x(n)|  \leq \sup_{n\in\N} |\alpha_n| \leq \|T\|=1,$$
and the proof for $X=\ell_p$ with $1\leq p < \infty$ is a consequence of the fact that 
$$1 = \|Tx\|^p = \sum_{n=1}^\infty |\alpha_n|^p |x (n)|^p = \sum_{n \in J} |x(n)|^p + \sum_{n \in \N \setminus J} |\alpha_n|^p |x(n)|^p \leq \sum_{j=1}^\infty |x_n|^p = 1.$$
\end{proof}

\begin{theorem} \label{theo:diag_norm}
Let $X=c_0$ or $\ell_p$, $1\leq p\leq \infty$. Let $T \in  \mathcal{L}(X)$ be a norm one operator defined as
$$Tx = (\alpha_n x(n))_{n=1}^{\infty} \quad (x = (x(n))_{n=1}^\infty \in X),$$
where $(\alpha_n)_{n=1}^\infty$ is a bounded sequence of complex numbers. 
Then, the following assertions are equivalent:
\begin{itemize}
\item[(a)] $T\in \mathcal{A}_{\| \cdot \|}(X, X)$,
\item[(b)] Both of these conditions are satisfied:
\begin{enumerate}
\item There exists some $n_0\in \mathbb{N}$ such that $|\alpha_{n_0}|= 1$.
\item If $J=\{ n\in \mathbb{N}:\, |\alpha_n|= 1 \}$, then either $J=\mathbb{N}$ or
$\sup_{n\in \mathbb{N}\backslash J} |\alpha_n|  < 1.$
\end{enumerate}
\end{itemize}
\end{theorem}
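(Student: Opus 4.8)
The plan is to prove both implications directly from the explicit description of the norming points given in Lemma~\ref{lem:diag_norm}, treating the sup-norm spaces ($X=c_0$ or $\ell_\infty$) and the spaces $\ell_p$ with $1\leq p<\infty$ in parallel. Throughout I set $\beta:=\sup_{n\in\mathbb{N}\setminus J}|\alpha_n|$ (with $\beta=0$ if $J=\mathbb{N}$) and recall that $\|T\|=\sup_n|\alpha_n|=1$.

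For (a)$\Rightarrow$(b) I argue by contraposition. The key observation is that whenever (b) fails one can extract a sequence $(n_k)$ with $n_k\notin J$, $|\alpha_{n_k}|<1$, and $|\alpha_{n_k}|\to 1$: if (b1) fails then $J=\emptyset$ while $\sup_n|\alpha_n|=1$; if (b1) holds but (b2) fails then $J\neq\mathbb{N}$ and $\sup_{n\notin J}|\alpha_n|=1$. I then test Definition~\ref{maindefinition}.(i) on the unit vectors $e_{n_k}$, which satisfy $\|Te_{n_k}\|=|\alpha_{n_k}|\to 1$, so that $\|Te_{n_k}\|>1-\eta$ for large $k$ no matter how small $\eta>0$ is. It remains to show that every norming point $x_0$ (i.e. $\|Tx_0\|=1$) stays far from $e_{n_k}$. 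In the sup-norm case, since $n_k\notin J$ the coordinate $n_k$ contributes $|\alpha_{n_k}x_0(n_k)|\leq|\alpha_{n_k}|<1$ to $\sup_n|\alpha_n x_0(n)|=1$; deleting this strictly subcritical coordinate leaves the supremum unchanged, so $\sup_{n\neq n_k}|x_0(n)|=1$ and hence $\|x_0-e_{n_k}\|_\infty\geq 1$. In the $\ell_p$ case, Lemma~\ref{lem:diag_norm}(iii) forces every norming $x_0$ to be supported on $J$, so $x_0(n_k)=0$ and $\|x_0-e_{n_k}\|_p^p=1+\|x_0\|_p^p=2$. When $J=\emptyset$ in $c_0$ or $\ell_p$ there are simply no norming points, so $T\notin\NA(X,X)$ and thus $T\notin\mathcal{A}_{\|\cdot\|}(X,X)$. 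In all remaining cases every norming point lies at distance $\geq 1$ from $e_{n_k}$, so choosing $\varepsilon\in(0,1)$ shows that no admissible $\eta$ can exist.

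For (b)$\Rightarrow$(a), assume (b1) and (b2). If $J=\mathbb{N}$ then all $|\alpha_n|=1$, so $T$ is an isometry (i.e. $\|Tx\|=\|x\|$ for every $x$), every point of $S_X$ is a norming point, and $T\in\mathcal{A}_{\|\cdot\|}(X,X)$ trivially. Otherwise $\beta<1$. Fix $\varepsilon>0$; I will exhibit $\eta=\eta(\varepsilon,T)\leq 1-\beta$. Given $x\in S_X$ with $\|Tx\|>1-\eta$, the idea is to keep the $J$-part of $x$ and discard (or suitably renormalize) the rest. In the sup-norm case one locates $m$ with $|\alpha_m x(m)|>1-\eta$; the constraint $\eta\leq 1-\beta$ forces $|\alpha_m|>\beta$, hence $m\in J$ and $|\alpha_m|=1$, and setting $x_0(m)=x(m)/|x(m)|$ while keeping the other coordinates gives $\|Tx_0\|=1$ with $\|x_0-x\|_\infty=1-|x(m)|<\eta$. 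In the $\ell_p$ case one estimates $\|Tx\|_p^p\leq\sum_{n\in J}|x(n)|^p+\beta^p\sum_{n\notin J}|x(n)|^p$; writing $B:=\sum_{n\notin J}|x(n)|^p$, this yields $B<\bigl(1-(1-\eta)^p\bigr)/(1-\beta^p)$, which tends to $0$ as $\eta\to 0$ precisely because $\beta<1$. Renormalizing the restriction of $x$ to $J$ to norm one produces a norming point $x_0$ with $\|x_0-x\|_p^p=(1-(1-B)^{1/p})^p+B$, again tending to $0$ with $B$. In both cases one finally takes $\eta$ small enough to bring the resulting distance below $\varepsilon$.

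I expect the main obstacle to be the quantitative estimate in the $\ell_p$ case of (b)$\Rightarrow$(a): one must simultaneously control the mass $B$ lost outside $J$ and the renormalization error incurred on $J$, and check that the explicit bound built from $\bigl(1-(1-\eta)^p\bigr)/(1-\beta^p)$ genuinely vanishes as $\eta\to 0$, which is exactly where the strict gap $\beta<1$ enters. The conceptually most delicate point is the $\ell_\infty$ sub-case of (a)$\Rightarrow$(b), where the supremum defining $\|Tx_0\|$ need not be attained, so one must lean on the subsequential description in Lemma~\ref{lem:diag_norm}(ii) together with the elementary fact that removing a single strictly subcritical coordinate does not lower a supremum equal to $1$.
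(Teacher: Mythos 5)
Your proposal is correct and takes essentially the same approach as the paper: both implications rest on Lemma~\ref{lem:diag_norm}, with the unit vectors $e_{n_k}$ as test points for (a)$\Rightarrow$(b) and the same norming-point construction for (b)$\Rightarrow$(a) (normalize one $J$-coordinate in the sup-norm case, renormalize the restriction to $J$ in the $\ell_p$ case, with the same $\bigl(1-(1-\eta)^p\bigr)/(1-\beta^p)$-type control on the mass outside $J$). The only differences are organizational: you exhibit $\eta(\varepsilon,T)$ directly where the paper argues by contradiction, and your delete-a-subcritical-coordinate observation treats $c_0$ and $\ell_\infty$ uniformly (in particular handling norm-attainment in $\ell_\infty$ with $J=\emptyset$), a case the paper dismisses as ``very similar.''
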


\begin{proof}
We prove the result for $X=c_0$ first. The proof for $X=\ell_\infty$ is very similar, so we omit it.

$(a) \Longrightarrow (b)$: By Lemma \ref{lem:diag_norm}, it suffices to show that $\sup_{n \in \N \setminus J} |\alpha_n| < 1$ when $J \neq \N$. Assume to the contrary that $\sup_{n \in \N \setminus J} |\alpha_n| = 1$. Pick a sequence $(n_k) \subset \N \setminus J$ such that $|\alpha_{n_k}| \geq 1 - \frac{1}{k}$ for each $k \in \N$. Given $\eps \in (0,1)$, choose $N \in \N$ so that $N^{-1} < \eta(\eps, T)$, then $\| T (e_{n_N}) \| > 1 - \eta(\eps, T)$. Thus there exists $x_0 \in S_{c_0}$ such that $T$ attains its norm at $x_0$ and $\| x_0 - e_{n_N}\| <\eps$. Now, Lemma \ref{lem:diag_norm} implies that there exists $k \in J$ such that $|x_0(k)| = 1 = |\alpha_k|$. This contradicts $\| x_0 - e_{n_N}\| <\eps$. 

$(b) \Longrightarrow (a)$: If $J = \N$, then $T$ attains its norm at every point in $S_{c_0}$. Suppose that $J \neq \N$ and $\sup_{n \in \N \setminus J} |\alpha_n | < 1$. Assume to the contrary that $T \notin \mathcal{A}_{\|\cdot\|} (c_0, c_0)$, then there is some $\varepsilon_0 \in (0, 1)$ such that for each $n\in\mathbb{N}$, there is some $x_n\in S_{c_0}$ such that $1\geq \| T(x_n)\| \geq 1-\frac{1}{n}$, and whenever $x\in S_{c_0}$ satisfies that $\|x - x_n\| < \varepsilon_0$, we have that $\| T(x)\| < 1$. Let $n_0\in \mathbb{N}$ be such that 
\[
 \sup_{n \in \N \setminus J} |\alpha_n| < 1-\frac{1}{n_0}  \,\, \text{ and } \,\, \frac{1}{n_0} < \varepsilon_0. 
\]
Since $\| T(x_{n_0})\| \geq 1-\frac{1}{n_0}$, we can choose $k\in J$ such that $|x_{n_0}(k)| \geq 1 - \frac{1}{n_0}$. Let $y_{n_0} \in S_{c_0}$ be the point such that 
\begin{enumerate}
\item $y_n(j):=x_n(j)$ for all $j \in\mathbb{N}\backslash \{k\}$, 
\item $\ds y_n(k):=\frac{x_n(k)}{\| x_n(k)\|}$.
\end{enumerate} 
It is clear that $\| T(y_{n_0})\| = \|y_{n_0}\| = 1$ and $\| y_{n_0} - x_{n_0} \| \leq \frac{1}{n_0} < \varepsilon_0$. This contradiction completes the proof.

\vspace{0.2cm}

Let us prove now the result for $X=\ell_p$ with $1\leq p < \infty$.

\vspace{0.2cm}

$(a) \Longrightarrow (b)$: It suffices to check that $\sup_{n \in \N \setminus J} |\alpha_n| < 1$ when $J \neq \N$. Assume to the contrary that $\sup_{n \in \N \setminus J} |\alpha_n| =1$. Given $\eps \in (0,1)$, pick $n_0 \in \N \setminus J$ so that $|\alpha_{n_0}| > 1 - \eta(\eps, T)$. Thus, $\|T e_{n_0}\| > 1 - \eta(\eps, T)$. By Lemma \ref{lem:diag_norm}, if $T$ attains its norm at $x \in S_{\ell_p}$, then $|x (n_0)| = 0$ which implies that $\| x - e_{n_0} \| \geq 1 > \eps$. 
	
	$(b) \Longrightarrow (a)$: If $J = \N$, then we are done. Suppose that $J \neq \N$ and $\beta := \sup_{n \in \N \setminus J} |\alpha_n| < 1$. Assuming that $T$ does not belong to $\mathcal{A}_{\| \cdot \|}(\ell_p, \ell_p)$, there exists $\varepsilon_0 \in (0, 1)$ such that for each $n\in\mathbb{N}$, there is some $x_n\in S_{\ell_p}$ such that $1\geq \| T(x_n)\| \geq 1-\frac{1}{n}$, and whenever $x \in S_{\ell_p}$ satisfies that $\|x - x_n\| < \varepsilon_0$, we have that $\| T(x)\| < 1$. Note that 
\begin{equation*}
\left(1 - \frac{1}{n}\right)^p \leq \sum_{k \in J} |x_n (k)|^p +  \beta \sum_{k \in \N \setminus J} |x_n (k)|^p < \sum_{k=1}^\infty |x_n (k)|^p = 1.
\end{equation*}	
This implies that $ \sum_{k \in J} |x_n (k)|^p$ converges to $1$ and $\sum_{k \in \N \setminus J} |x_n (k)|^p$ converges to $0$ as $n \rightarrow \infty$. 
Set $A_n := \left( \sum_{k \in J} |x_n (k)|^p \right)^{\frac{1}{p}}$ and choose $n_0 \in \N$ such that $1 - A_{n_0}^p < \frac{\eps_0^p}{2}$. Define $y_{n_0} \in S_{\ell_p}$ by 
\[
y_{n_0} (k) = \frac{x_{n_0} (k)}{A_{n_0}} \,\, \text{ for every } \, k \in J  \,\, \text{ and } \,\, y_{n_0} (k) = 0 \,\, \text{ for every } \, k \in \N \setminus J. 
\]
By Lemma \ref{lem:diag_norm} that $\| T y_{n_0} \| = 1$. However, 
\begin{equation*} 
\| y_{n_0} - x_{n_0} \|^p \leq (1 - A_{n_0})^p + \sum_{j \in \N \setminus J} |x_{n_0} (k)|^p \leq 2(1 - A_{n_0}^p ) < \eps_0^p. 
\end{equation*}  
\end{proof}

Next we are proving the counterpart of Lemma \ref{lem:diag_norm} and Theorem \ref{theo:diag_norm} for numerical radius. As in the $\mathcal{A}_{\|\cdot\|}$ case, it gives a whole characterization for the set $\mathcal{A}_{\numerical}$ for diagonal operators on $c_0$ and $\ell_p$. Let us notice that Lemma \ref{lem:diag_nu} establishes some properties for a numerical radius one diagonal operator on $c_0$ and $\ell_p$ which attains its numerical radius. We will use it to prove Theorem \ref{theo:diag_nu} and again we present a short proof of it for the sake of completeness.

\begin{lemma}\label{lem:diag_nu}
Let $X=c_0$ or $\ell_p$, $1\leq p< \infty$. Let $T \in  \mathcal{L}(X)$ be a numerical radius one operator defined as 
$$Tx = (\alpha_n x(n))_{n=1}^{\infty} \quad (x = (x(n))_{n=1}^\infty \in X),$$
where $(\alpha_n)_{n=1}^\infty$ is a bounded sequence of complex numbers. If $T$ attains its numerical radius at $(x, x^*) \in \Pi (X)$, then we have the following: 
\begin{enumerate}
\item There exists $n_0 \in \N$ such that $|\alpha_{n_0}| = 1$.
\item For $X=c_0$, $\re  x^*(n)x(n)  = |x^*(n)x (n)| = |x^* (n)|$ for every $n \in \N$. \\ 
For $X=\ell_p$, $\re  x^*(n)x(n)  = |x^*(n)x (n)| = |x(n)|^p = |x^* (n)|^q$ for every $n \in \N$.

\item There exists $\theta \in [0, 2\pi)$ such that $\alpha_n = e^{i \theta}$ on $\{ n \in \N : |x^* (n) | \neq 0 \}$.
\end{enumerate}
\end{lemma}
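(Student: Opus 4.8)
The plan is to read off all three conclusions from the equality cases in the elementary inequalities that govern a state $(x,x^*)\in\Pi(X)$, combined with the numerical radius constraint. As a preliminary reduction I would first record that $\nu(T)=1$ forces $|\alpha_n|\le 1$ for every $n$: testing the numerical radius against the canonical states $(e_n,e_n^*)\in\Pi(X)$ gives $|\alpha_n|=|\langle e_n^*,Te_n\rangle|\le\nu(T)=1$, which is exactly the bound needed to run the triangle-inequality argument later.

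Next I would prove item (2), which is really a statement about the state $(x,x^*)$ alone and does not involve $T$. Starting from $1=\langle x^*,x\rangle=\sum_n x^*(n)x(n)$ and bounding $\re\sum_n x^*(n)x(n)\le\sum_n|x^*(n)x(n)|\le\|x^*\|\,\|x\|=1$ --- via H\"older's inequality in the $\ell_p$ case and via $\|x^*\|_1\|x\|_\infty$ in the $c_0$ case --- all of these inequalities collapse to equalities. The equality in the triangle inequality forces each $x^*(n)x(n)$ to be a nonnegative real, so $\re x^*(n)x(n)=|x^*(n)x(n)|$; the equality in H\"older forces $|x^*(n)|^q=|x(n)|^p$ coordinatewise in the $\ell_p$ case, and the routine computation $|x^*(n)||x(n)|=|x(n)|^{p/q+1}=|x(n)|^p$ (using $p/q+1=p$) then yields the chain in (2). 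In the $c_0$ case the same saturation gives $|x^*(n)|(|x(n)|-1)=0$, so $x^*(n)\ne 0$ implies $|x(n)|=1$ and hence $|x^*(n)x(n)|=|x^*(n)|$. A crucial by-product I would emphasise is the \emph{support relation}: in both cases $\{n:\,x^*(n)\ne 0\}$ coincides with $\{n:\,x^*(n)x(n)\ne 0\}$.

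Finally I would deduce (1) and (3) from the attainment $|\langle x^*,Tx\rangle|=1$. Writing $t_n:=x^*(n)x(n)$, by item (2) the $t_n$ are nonnegative reals with $\sum_n t_n=1$, and $1=|\langle x^*,Tx\rangle|=\bigl|\sum_n\alpha_n t_n\bigr|\le\sum_n|\alpha_n|t_n\le\sum_n t_n=1$. The second equality together with $|\alpha_n|\le 1$ and $t_n\ge 0$ forces $|\alpha_n|=1$ whenever $t_n>0$, which proves (1) since $\sum_n t_n=1$ guarantees some $t_{n_0}>0$. The first (triangle-inequality) equality forces all the nonzero summands $\alpha_n t_n$ to share a common argument $e^{i\theta}$; as $t_n>0$ is a positive real, this means $\alpha_n=e^{i\theta}$ on $\{n:\,t_n>0\}$, and the support relation from the previous step identifies this set with $\{n:\,|x^*(n)|\ne 0\}$, giving (3). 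I expect the only delicate points to be bookkeeping ones: extracting the three equality conditions simultaneously, handling the $p=1$ endpoint where $q=\infty$ (so that the clause $|x^*(n)|^q$ in (2) is read as the limiting condition $|x^*(n)|=1$ on the support of $x$), and making sure the index set $\{t_n>0\}$ is matched with $\{|x^*(n)|\ne 0\}$ rather than with the a priori smaller set on which both $x(n)$ and $x^*(n)$ are nonzero.
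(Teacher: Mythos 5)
Your proposal follows essentially the same route as the paper's proof: item (2) is obtained by saturating the chain $1=\re\langle x^*,x\rangle\leq\sum_n|x^*(n)x(n)|\leq\|x^*\|\,\|x\|=1$ (what the paper calls ``a convex argument''), and item (3) by saturating $1=\bigl|\sum_n\alpha_n x^*(n)x(n)\bigr|\leq\sum_n|\alpha_n|\,|x^*(n)x(n)|\leq 1$. The only structural difference is item (1): you extract it directly from the second chain together with the preliminary bound $|\alpha_n|\leq 1$ coming from the canonical states $(e_n,e_n^*)$, whereas the paper notes that attainment of the numerical radius forces $\|Tx\|=\|T\|=\nu(T)=1$ and then invokes the norm-attainment characterization of Lemma \ref{lem:diag_norm}. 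Your version is slightly more self-contained; both are fine.

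There is, however, a genuine gap at the endpoint $p=1$, precisely at the point you flagged as delicate and then settled the wrong way. The ``support relation'' $\{n: x^*(n)\neq 0\}=\{n: x^*(n)x(n)\neq 0\}$ does \emph{not} hold in $\ell_1$: equality in $\sum_n|x^*(n)|\,|x(n)|=\|x^*\|_\infty\|x\|_1$ only forces $|x^*(n)|=1$ on the support of $x$, and it puts no constraint whatsoever on $x^*(n)$ at coordinates where $x(n)=0$. Concretely, take $x=e_1$ and $x^*=(1,1,0,0,\dots)\in S_{\ell_\infty}$, so that $(x,x^*)\in\Pi(\ell_1)$, and let $T$ be the diagonal operator with $\alpha=(1,\tfrac12,0,0,\dots)$; then $\nu(T)=1$ and $|\langle x^*,Tx\rangle|=1$, yet $\alpha_2=\tfrac12$ is not unimodular although $x^*(2)\neq 0$. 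So your deduction of (3) breaks in $\ell_1$ --- and in fact this example shows that conclusion (3) \emph{as literally stated} is false for $\ell_1$, so the defect is inherited from the statement itself (and from the paper's own proof, which only treats $c_0$ explicitly and dismisses $\ell_p$ as ``similar''). Your argument is complete and correct for $c_0$ and for $1<p<\infty$, where the support relation does hold ($x^*(n)\neq 0$ forces $|x(n)|=1$, respectively $|x(n)|^p=|x^*(n)|^q\neq 0$); for $p=1$ it goes through verbatim once (3) is restated over the set $\{n: x(n)\neq 0\}$, equivalently $\{n: x^*(n)x(n)\neq 0\}$.
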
 

\begin{proof} Let us see the result for $X=c_0$. First of all, as $(x, x^*) \in \Pi (c_0)$, by using a convex argument, it follows that $\re (x^*(n)x(n))  = |x^*(n)x(n)| = |x^* (n)|$ for every $n \in \N$. This proves item (2). Notice that (1) is clear as the operator $T$ attains its norm as well (notice that for these operators, we always have $\|T\| = \nu(T) =1$). To see (3), observe that 
\begin{align*} 
1=|\langle x^*, Tx \rangle| = \left| \sum_{n \in \N} \alpha_n x^* (n) x(n) \right| \leq  \sum_{n \in \N} |\alpha_n x^* (n) x(n) | \leq 1.
\end{align*} 
Therefore, there exists $\theta \in [0, 2\pi)$ such that $\alpha_n = e^{i \theta}$ on $\{ n \in \N : |x^* (n) | \neq 0 \}$.  The proof for $X=\ell_p$ with $1\leq p < \infty$ is similar, just keeping in mind the equality case of H\"older's inequality, so we omit it.
\end{proof}

\begin{theorem} \label{theo:diag_nu}
Let $X=c_0$ or $\ell_p$, $1\leq p< \infty$. Let $T \in  \mathcal{L}(X)$ be a numerical radius one operator defined as 
\begin{equation*} 
Tx = (\alpha_n x(n))_{n=1}^{\infty} \quad (x = (x(n))_{n=1}^\infty \in X),
\end{equation*} 
where $(\alpha_n)_{n=1}^\infty$ is a bounded sequence of complex numbers. Then, the following assertions are equivalent:
\begin{itemize}
\item[(a)] $T\in\mathcal{A}_{\numerical}(X)$.
\item[(b)] The following both conditions hold:
\begin{enumerate}
\item There exists some $n_0\in \mathbb{N}$ such that $|\alpha_{n_0}|= 1$.
\item If $J=\{ n\in \mathbb{N}:\, |\alpha_n|= 1 \}$, then the cardinality of the set $\{ \alpha_n  : n \in J \}$ is finite and $\sup_{n \in \N \setminus J } |\alpha_n | < 1$ when $J \neq \N$. 
\end{enumerate}
\end{itemize}
\end{theorem}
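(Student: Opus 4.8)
The plan is to reduce the action of $T$ at a state to a convex-combination problem on the unit circle, and then to mimic the strategy of Theorem \ref{theo:diag_norm} while paying special attention to the genuinely new finiteness requirement on the phases. The starting observation is that, by Lemma \ref{lem:diag_nu}.(2), at any state $(x,x^*)\in\Pi(X)$ one has $x^*(n)x(n)=|x^*(n)|\ge 0$ (when $X=c_0$) or $x^*(n)x(n)=|x(n)|^p\ge 0$ (when $X=\ell_p$), so that
\[
\langle x^*, Tx\rangle = \sum_n \alpha_n\, p_n, \qquad p_n\ge 0,\quad \sum_n p_n = 1,
\]
where $p_n=|x^*(n)|$ in the $c_0$ case and $p_n=|x(n)|^p$ in the $\ell_p$ case. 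Thus $|\langle x^*,Tx\rangle|$ is the modulus of a convex combination of the points $\alpha_n\in\overline{\mathbb{D}}$, and Lemma \ref{lem:diag_nu}.(3) says exactly that the value $1$ is reached only when all the mass sits on a single phase class.

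For $(a)\Rightarrow(b)$ I would argue as follows. Since $T\in\Anu$ it attains its numerical radius, so Lemma \ref{lem:diag_nu}.(1) gives condition (1). For (2) I fix a small $\e\in(0,1)$ and use the corresponding $\eta(\e,T)$ from the definition. If $\sup_{n\notin J}|\alpha_n|=1$, I pick $n_k\notin J$ with $|\alpha_{n_k}|\to 1$ and feed $(e_{n_k},e_{n_k}^*)$ into the definition; any nearby attaining state $(x,x^*)$ must have $x^*(n_k)\neq 0$ (as $\|x^*-e_{n_k}^*\|<\e<1$), and then Lemma \ref{lem:diag_nu}.(3) forces $|\alpha_{n_k}|=1$, i.e.\ $n_k\in J$, a contradiction, exactly as in the norm case. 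The new point is the finiteness of $\{\alpha_n:n\in J\}$: if this set were infinite it would accumulate on the unit circle, so I could choose $m\neq m'$ in $J$ with $\alpha_m\neq\alpha_{m'}$ but $\tfrac12|\alpha_m+\alpha_{m'}|>1-\eta(\e,T)$, and build the explicit two-coordinate state (for $c_0$, $x=e_m+e_{m'}$ and $x^*=\tfrac12(e_m^*+e_{m'}^*)$; for $\ell_p$, the analogously normalized pair) whose numerical value is $\tfrac12(\alpha_m+\alpha_{m'})$. This state almost attains, but for $\e$ small every state within $\e$ of it still charges both $m$ and $m'$, so Lemma \ref{lem:diag_nu}.(3) would force $\alpha_m=\alpha_{m'}$, a contradiction.

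For $(b)\Rightarrow(a)$ I would argue by contradiction, assuming there are $\e_0\in(0,1)$ and states $(x_n,x_n^*)$ with $|\langle x_n^*,Tx_n\rangle|\ge 1-\frac1n$ admitting no attaining state within $\e_0$. Writing the value as the convex combination above, I first use $\beta:=\sup_{n\notin J}|\alpha_n|<1$ (when $J\neq\mathbb{N}$) together with $|\langle x_n^*,Tx_n\rangle|\le \beta+(1-\beta)\sum_{m\in J}p_m$ to force $\sum_{m\in J}p_m\to 1$. Grouping the indices of $J$ by their finitely many phases $e^{i\theta_1},\dots,e^{i\theta_L}$ and setting $q_\ell^{(n)}=\sum_{m\in J_\ell}p_m$, the value is within $o(1)$ of $\sum_\ell e^{i\theta_\ell}q_\ell^{(n)}$; since $\overline{\mathbb{D}}$ is strictly convex and the $e^{i\theta_\ell}$ are distinct, such a combination has modulus near $1$ only when it is near a vertex, so a single class $\ell_n$ carries mass $q_{\ell_n}^{(n)}\to 1$. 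This is precisely where finiteness of the phase set enters, as it yields a uniform gap below $1$ for combinations spread over two distinct phases. Finally I produce the nearby attaining state by truncating to $J_{\ell_n}$ and renormalizing: for $c_0$ I keep $x_n$ and replace $x_n^*$ by its normalized restriction to $J_{\ell_n}$; for $\ell_p$ with $1<p<\infty$ I restrict and renormalize both $x_n$ and $x_n^*$ (using $|x_n^*(m)|^q=|x_n(m)|^p$); and for $p=1$ I keep $x_n^*$ and renormalize $x_n$. In each case the restricted pair lies in $\Pi(X)$, has numerical value of modulus $1$ because $T$ acts as the scalar $e^{i\theta_{\ell_n}}$ on $J_{\ell_n}$, and sits at distance $O(1-q_{\ell_n}^{(n)})\to 0$ from $(x_n,x_n^*)$, contradicting the choice of $\e_0$.

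I expect the main obstacle to be the finiteness-of-phases condition. On the necessity side it requires the delicate two-coordinate construction combined with the rigidity of Lemma \ref{lem:diag_nu}.(3); on the sufficiency side it requires quantifying that a convex combination of finitely many distinct unit vectors can be near the unit circle only when concentrated near one of them. The remaining technical point is checking, in the $\ell_p$ case with $1<p<\infty$, that the renormalized \emph{functional} stays $\e_0$-close to $x_n^*$ (not merely that the vector stays close), which I would verify by the direct estimate $\|y_n^*-x_n^*\|_q^q=(1-q_{\ell_n}^{(n)1/q})^q+(1-q_{\ell_n}^{(n)})\to 0$.
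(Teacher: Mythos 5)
Your proposal is correct and follows essentially the same route as the paper's proof: the same key tool (Lemma \ref{lem:diag_nu}), the same two-coordinate almost-attaining states built from an accumulation point of the phases to force finiteness of $\{\alpha_n : n\in J\}$, and the same truncate-to-a-single-phase-class-and-renormalize construction for the converse. The only differences are presentational --- you make the convex-combination reduction and the concentration-near-one-vertex step quantitative where the paper passes to subsequences and limits, and you treat $\ell_1$ directly where the paper invokes duality with the $c_0$ case.
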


Before giving the precise proof of Theorem \ref{theo:diag_nu}, let us notice that when $(\alpha_n)_{n=1}^{\infty}$ is a bounded sequence of {\it real} numbers, we have that the set $\{\alpha_n: n \in J\} \subseteq \{1, -1\}$, that is, it is automatically finite. Combining Theorem \ref{theo:diag_norm} and Theorem  \ref{theo:diag_nu}, we get the following immediate consequence.

\begin{cor} 
	Let $X=c_0$ or $\ell_p$, $1\leq p< \infty$. Let $T \in  \mathcal{L}(X)$ be a numerical radius one operator defined as 
	\begin{equation*} 
	Tx = (\alpha_n x(n))_{n=1}^{\infty} \quad (x = (x(n))_{n=1}^\infty \in X),
	\end{equation*} 
	where $(\alpha_n)_{n=1}^\infty$ is a bounded sequence of real numbers. Then, the following assertions are equivalent:
	\begin{itemize}
		\item[(a)] $T \in \mathcal{A}_{\|\cdot\|}(X, X)$.
		\item[(b)] $T\in\mathcal{A}_{\numerical}(X)$.
		\item[(c)] Both of the following conditions are satisfied:
		\begin{enumerate}
			\item There exists some $n_0\in \mathbb{N}$ such that $|\alpha_{n_0}|= 1$.
			\item If $J=\{ n\in \mathbb{N}:\, |\alpha_n|= 1 \}$, then $J = \N$ or $\sup_{n \in \N \setminus J } |\alpha_n | < 1$ when $J \neq \N$. 
		\end{enumerate}
	\end{itemize}
\end{cor}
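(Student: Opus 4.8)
The plan is to derive this corollary with essentially no new analysis, by transporting the two previously established characterizations (Theorem \ref{theo:diag_norm} for the norm and Theorem \ref{theo:diag_nu} for the numerical radius) into the single common condition (c), the point being that over a \emph{real} coefficient sequence both characterizations collapse to the same statement. I would prove $(a)\Leftrightarrow(c)$ and $(b)\Leftrightarrow(c)$ separately and then chain them.

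First I would observe that condition (c) of the corollary is, verbatim, condition (b) of Theorem \ref{theo:diag_norm}: item (c)(1) is (b)(1) there, and item (c)(2) (``$J=\mathbb{N}$ or $\sup_{n\in\mathbb{N}\setminus J}|\alpha_n|<1$'') is exactly (b)(2) there. Since Theorem \ref{theo:diag_norm} applies to $c_0$ and to $\ell_p$ for all $1\leq p\leq\infty$ (in particular for the range $1\leq p<\infty$ considered here), it gives $T\in\mathcal{A}_{\|\cdot\|}(X,X)$ if and only if (c) holds. This settles $(a)\Leftrightarrow(c)$ with no additional work.

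Next, to obtain $(b)\Leftrightarrow(c)$, I would invoke Theorem \ref{theo:diag_nu}. Its characterizing condition (b)(2) demands both that the set $\{\alpha_n:n\in J\}$ be \emph{finite} and that $\sup_{n\in\mathbb{N}\setminus J}|\alpha_n|<1$ when $J\neq\mathbb{N}$. The decisive remark is the one recorded immediately before the corollary: if $(\alpha_n)_{n=1}^\infty$ is real, then $|\alpha_n|=1$ forces $\alpha_n\in\{1,-1\}$, so $\{\alpha_n:n\in J\}\subseteq\{1,-1\}$ is automatically finite. Hence the finiteness clause in Theorem \ref{theo:diag_nu}(b)(2) is vacuously satisfied, and that condition reduces precisely to item (c)(2). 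Thus Theorem \ref{theo:diag_nu} yields $T\in\mathcal{A}_{\numerical}(X)$ if and only if (c) holds, giving $(b)\Leftrightarrow(c)$.

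There is no genuine analytic obstacle here; the only thing to be careful about is the bookkeeping that the two conditions really coincide under the reality hypothesis—specifically that the ``$J=\mathbb{N}$'' alternative of the norm characterization lines up with the (then automatic) finiteness-plus-$\sup$ clause of the numerical radius characterization, so that both theorems feed into the identical condition (c). Once this matching is in place, chaining the equivalences as $(a)\Leftrightarrow(c)\Leftrightarrow(b)$ completes the proof.
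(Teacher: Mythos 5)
Your proposal is correct and is exactly the paper's own argument: the authors state the corollary as an immediate consequence of Theorem \ref{theo:diag_norm} and Theorem \ref{theo:diag_nu}, after remarking that for real coefficients $\{\alpha_n : n \in J\} \subseteq \{1,-1\}$ is automatically finite, so both characterizations collapse to condition (c). Your bookkeeping that the two conditions coincide under the reality hypothesis (including the vacuity of the finiteness clause) is precisely the content of that remark, so nothing further is needed.
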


\begin{proof}[Proof of Theorem \ref{theo:diag_nu}] Let us prove first the result for $X=c_0$. The case $X=\ell_1$ can be proved similarly to the case $X=c_0$ by using duality arguments, so we omit it.

$(a) \Longrightarrow (b)$: By Lemma \ref{lem:diag_nu}, the set $J$ is non-empty. 
Assume that the set $\{ \alpha_n : n \in J \}$ is an infinite set. Write $\{ \alpha_n : n \in J \} = \{ e^{i\theta_1}, \ldots, e^{i\theta_n}, \ldots \}$. Then, there exists a subsequence $(n_k)_{k=1}^\infty \subset J$ such that $e^{i \theta_{n_k}}$ converges to some $\lambda \in \C$ with $|\lambda| = 1$. Given $\eps \in (0, 1/2)$, let $k_0 \in \N$ be such that $|e^{i \theta_{n_k}} - \lambda| < {\eta(\eps, T)}$ for every $k \geq k_0$. Then, for $k \neq k' \geq k_0$, we obtain that 
$\left| \frac{e^{i \theta_{n_{k}}} + e^{i \theta_{n_{k'}}}}{2} - \lambda \right| < \eta(\eps ,T).$
Pick $n  \neq n'$ in $J$ so that $\alpha_n = e^{i \theta_{n_k}}$ and $\alpha_{n'} = e^{i \theta_{n_{k'}}}$.
Then, $( (e_{n} + e_{n'} ), \frac{1}{2} (e_{n}^* + e_{n'}^* )) \in \Pi (c_0)$ and 
$
\left|\left\langle  \frac{1}{2} (e_{n}^* + e_{n'}^* ) ,T\left(  e_{n} + e_{n'} \right) \right\rangle\right| = \left| \frac{e^{i \theta_{n_{k}}} + e^{i \theta_{n_{k'}}}}{2} \right| > 1 - \eta(\eps, T). $
However, if $T$ attains its numerical radius at $(x, x^*) \in \Pi (c_0)$, then, by Lemma \ref{lem:diag_nu}, there exists $\theta \in [0, 2\pi)$ such that $\alpha_m = e^{i\theta}$ on $A := \{ m \in \N : |x^* (m)| \neq 0\}$. If $n, n' \notin A$, then for $k \in A$, 
$\| x - (e_{n} + e_{n'} ) \| \geq |\langle e_k^*, x - (e_{n} + e_{n'}) \rangle| = |x(k)| = 1 > \eps$. Otherwise, without loss of generality, we may assume that $n \in A$. As $\alpha_{n} \neq \alpha_{n'}$, we have that $n' \notin A$, i.e., $|x^* (n')| = 0$. It follows that $\left\|x^* - \frac{1}{2} (e_{n}^* + e_{n'}^* )\right\| \geq | \langle x^* - \frac{1}{2}(e_n^*  + e_{n'}^*), e_{n'} \rangle| = \frac{1}{2} > \eps. $
This proves that $\{ \alpha_m : m \in J \}$ must be a finite set. 
By applying a similar argument used in the proof of Theorem \ref{theo:diag_norm}, we can deduce that $\sup_{m \in \N \setminus J} |\alpha_m| < 1$ when $J \neq \N$. 

$(b) \Longrightarrow (a)$: Let us say that $\{ \alpha_n : n \in J \} = \{e^{i\theta_1}, \ldots, e^{i \theta_m} \}$ for some $m \in \N$. 
Assume to the contrary that $T$ does not belong to $\mathcal{A}_{\numerical}(c_0)$. Then, there exists some $\varepsilon_0\in(0, 1)$ such that for each $n\in \mathbb{N}$, there is $(x_n, x_n^*)\in\Pi(c_0)$ such that $1\geq |\langle x_n^*, T(x_n)\rangle |\geq 1-\frac{1}{n}$, and whenever $(x, x^*)\in\Pi(c_0)$ is such that $\| x-x_n\| < \varepsilon_0$ and $\| x^* - x_n^*\| < \varepsilon_0$, we have that $|\langle x^*, T(x)\rangle |< 1$. If $J \neq \N$, then, by Lemma \ref{lem:diag_nu}, 
\begin{align*}
1 &=  \sum_{k=1}^\infty x_n^* (k) x_n (k) \nonumber >  \sum_{k \in J_1} x_n^* (k) x_n (k) +\ldots + \sum_{k \in J_m} x_n^* (k) x_n (k) + \beta \sum_{k \in \N \setminus J}  x_n^* (k) x_n (k) \geq \\
%&=  \sum_{k \in J_1} x_n^* (k) x_n (k) +\ldots + \sum_{k \in J_m} x_n^* (k) x_n (k) + \sum_{k \in \N \setminus J} x_n^* (k) x_n (k) \\
&\geq \left| e^{i\theta_1}\sum_{k \in J_1} x_n^* (k) x_n (k) +\ldots + e^{i\theta_m} \sum_{k \in J_m} x_n^* (k) x_n (k) \right| +  \left| \sum_{k \in \N \setminus J} \alpha_k x_n^* (k) x_n (k) \right| \geq 1- \frac{1}{n},
\end{align*} 
for every $n \in \N$, where $J_k = \{ n \in \N : \alpha_n = e^{i\theta_k} \}$ and $\beta := \sup_{n \in \N \setminus J} |\alpha_n| < 1$. 
Passing to a subsequence, we may assume that $\sum_{k \in J_l} x_n^* (k) x_n (k)$ converges as $n \rightarrow \infty$ for each $1 \leq l \leq m$. 
As $e^{i\theta_l} \neq e^{i\theta_{l'}}$ for all $1\leq l \neq l' \leq m$, we can choose $1 \leq s \leq m$ so that 
\[
\sum_{k \in J_l} x_n^* (k) x_n (k) \rightarrow 0 \,\, \text{ for all } \,\, l \neq s, \,\, \text{ and } \, \,\, \sum_{k \in J_s} x_n^* (k) x_n (k) \rightarrow 1
\]
as $n \rightarrow \infty$. Also, notice that $ \sum_{k \in \N \setminus J}  x_n^* (k) x_n (k) \rightarrow 0$ as $n \rightarrow \infty$. 
Pick $n_0 \in \N$ large enough so that 
\[
\sum_{k \in J_l} |x_{n_0}^* (k)| < \frac{\eps}{3m} \,\, \text{ for all } \,\, l \neq s,  \,\, 1 - \sum_{k \in J_s} |x_{n_0}^* (k) | < \frac{\eps}{3}, \,\, \text{ and } \, \,  \sum_{k \in \N \setminus J}  x_{n_0}^* (k) x_{n_0} (k)  < \frac{\eps}{3}.
\]
Let $y_{n_0} = x_{n_0} \in S_{c_0}$ and define $y_{n_0}^* \in S_{\ell_1}$ as 
\[
\ds y_{n_0}^* (k) = \frac{x_{n_0}^*(k)}{\gamma} \,\, \text{ for every } \, k \in J_s \, \, \text{and } \,\, y_{n_0}^* (k) = 0 \,\, \text{ for every } \, k \in \N \setminus J_s,  
\]
where $\gamma = \sum_{k \in J_s} |x_{n_0}^* (k)|$. Then, $(y_{n_0}, y_{n_0}^*) \in \Pi (c_0)$,
\[
|\langle y_{n_0}^* , T(y_{n_0}^*) \rangle | = \left| \sum_{k \in J_s} \frac{e^{i \theta_s} x_{n_0}^* (k) x_{n_0} (k) }{\gamma}  \right| =  1,
\]
and
\[
\| y_{n_0}^* - x_{n_0}^* \| \leq (m-1) \frac{\eps}{3m} + \frac{\eps}{3} + \frac{\eps}{3} < \eps. 
\]
This is a contradiction. For the case when $J = \N$, we have 
\begin{align*}
1 =  \sum_{k=1}^\infty x_n^* (k) x_n (k) \nonumber &=  \sum_{k \in J_1} x_n^* (k) x_n (k) +\ldots + \sum_{k \in J_m} x_n^* (k) x_n (k) \\
&\geq \left| e^{i\theta_1}\sum_{k \in J_1} x_n^* (k) x_n (k) +\ldots + e^{i\theta_m} \sum_{k \in J_m} x_n^* (k) x_n (k) \right| \geq 1- \frac{1}{n},
\end{align*} 
for every $n \in \N$. Arguing as above, we may choose $1 \leq s \leq m$ and $n_0 \in \N$ such that 
\[
\sum_{k \in J_l} |x_{n_0}^* (k)| < \frac{\eps}{2m} \,\, \text{ for all } \,\, l \neq s,  \,\, \text{ and } \, \, 1 - \sum_{k \in J_s} |x_{n_0}^* (k) | < \frac{\eps}{2}.
\]
By defining $(y_{n_0}, y_{n_0}^*) \in \Pi (c_0)$ as above, we get again another contradiction. 

\vspace{0.2cm}

Let us prove the result for $X=\ell_p$ with $1<p<\infty$.

\vspace{0.2cm}

$(a) \Longrightarrow (b)$: Note that Lemma \ref{lem:diag_nu} implies (1). Assume that the set $\{ \alpha_n : n \in J \}$ is an infinite set, say $\{ \alpha_n : n \in J \} = \{ e^{i\theta_1}, \ldots, e^{i\theta_n}, \ldots \}$. 
Then, there exists a subsequence $(n_k)_{k=1}^\infty \subset J$ such that $e^{i \theta_{n_k}}$ converges to some $\lambda \in \C$ with $|\lambda| = 1$. Given $\eps \in (0, (\frac{1}{2})^{\frac{1}{q}})$, let $k_0 \in \N$ be such that $|e^{i \theta_{n_k}} - \lambda| < {\eta(\eps, T)}$ for every $k \geq k_0$. Then, for $k \neq k' \geq k_0$, we obtain that 
$\left| \frac{e^{i \theta_{n_k}} + e^{i \theta_{n_{k'}}}}{2} - \lambda \right| < \eta(\eps ,T). $
Pick $n  \neq n'$ in $J$ so that $\alpha_n = e^{i \theta_{n_k}}$ and $\alpha_{n'} = e^{i \theta_{n_{k'}}}$.
Thus, $( (\frac{1}{2 })^{\frac{1}{p}} (e_{n} + e_{n'} ), (\frac{1}{2})^{\frac{1}{q}}  (e_{n}^* + e_{n'}^* )) \in \Pi (\ell_p)$ and 
\begin{align*}
\left|\left\langle  \left(\frac{1}{2}\right)^{\frac{1}{q}} (e_{n}^* + e_{n'}^*) ,T\left( \left(\frac{1}{2}\right)^{\frac{1}{p}} \left(  e_{n} + e_{n'} \right) \right) \right\rangle\right| = \left| \frac{e^{i \theta_{n_k}} + e^{i \theta_{n_{k'}}}}{2} \right| > 1 - \eta(\eps, T). 
\end{align*} 
However, if $T$ attains its numerical radius at $(x, x^*) \in \Pi (\ell_p)$, then, by Lemma \ref{lem:diag_nu}, there exists $\theta \in [0, 2\pi)$ such that $\alpha_m = e^{i\theta}$ on $A := \{ m \in \N : |x^* (m)| \neq 0\}$. If $n, n' \notin A$, i.e., $|x^* (n)| = |x^* (n')| = 0$, then Lemma \ref{lem:diag_nu} implies that $|x(n)| = |x (n')| = 0$. Thus,
$\left\| x - \left(\frac{1}{2}\right)^{\frac{1}{p}} (e_{n} + e_{n'} ) \right\|^p \geq \frac{1}{2} + \frac{1}{2} = 1 > \eps.$ 
Otherwise, without loss of generality, we may assume that $n \in A$. As $\alpha_{n} \neq \alpha_{n'}$, we have that $n' \notin A$, i.e., $|x^* (n')| = 0$. It follows that 
$
\left\|x^* - \left( \frac{1}{2}\right)^{\frac{1}{q}}  (e_{n}^* + e_{n'}^* )\right\|^q \geq \frac{1}{2} > \eps^q. 
$
This proves that $\{ \alpha_m : m \in J \}$ must be a finite set. By applying a similar argument used in the proof of Theorem \ref{theo:diag_norm}, we can deduce that $\sup_{m \in \N \setminus J} |\alpha_m| < 1$ when $J \neq \N$. As the implication $(b) \Longrightarrow (a)$ can be proved in a similar way as before, we omit its proof. 
\end{proof} 

One may wonder whether or not there is a characterization for diagonal operators in the set $\mathcal{A}_{\|\cdot\|}$ when the domain is different from the range space. As a matter of fact, there is. Similar techniques as in Theorem \ref{theo:diag_norm} and Theorem \ref{theo:diag_nu} yield the following result on operators from $c_0$ into $\ell_p$ and from $\ell_p$ into $c_0$. Notice that in this case we cannot consider the set $\Anu$.
 
\begin{theorem}  
	Let $1\leq p< \infty$ be given. 
\begin{itemize} 	
\item[(I)] Let $T \in  \mathcal{L}(\ell_p ,c_0)$ be a norm one operator defined as 
	\begin{equation*} 
	Tx = (\alpha_n x(n))_{n=1}^{\infty} \quad (x = (x(n))_{n=1}^\infty \in \ell_p),
	\end{equation*} 
	where $(\alpha_n)_{n=1}^\infty$ is a bounded sequence of scalars. Then, the following assertions are equivalent:
	\begin{itemize}
		\item[(a)] $T\in\mathcal{A}_{\|\cdot\|}(\ell_p, c_0)$.
		\item[(b)] If $J = \{n \in \N: |\alpha_n| = 1 \}$, then $J$ is non empty and
		\begin{enumerate}
			\item $J = \N$ or
			\item $\sup_{\N \setminus J} |\alpha_n| < 1$
		\end{enumerate}
	\end{itemize}

\vspace{0.2cm} 

\item[(II)] Let $T \in  \mathcal{L}(c_0 , \ell_p)$ be a norm one operator defined as 
\begin{equation*} 
Tx = (\alpha_n x(n))_{n=1}^{\infty} \quad (x = (x(n))_{n=1}^\infty \in c_0),
\end{equation*} 
where $(\alpha_n)_{n=1}^{\infty}$ is a sequence of scalars with $p$-norm equal to 1. Then, the following assertions are equivalent:
\begin{itemize}
	\item[(a)] $T\in\mathcal{A}_{\|\cdot\|}(\ell_p, c_0)$.
	\item[(b)] There is some $N \in \N$ such that $\alpha_n = 0$ for all $n > N$.
\end{itemize}

\end{itemize} 
\end{theorem}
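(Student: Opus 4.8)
The plan is to follow the template of Theorem~\ref{theo:diag_norm}: first pin down exactly where each diagonal operator attains its norm, and then convert membership in $\mathcal{A}_{\|\cdot\|}$ into the stated size conditions on $(\alpha_n)$. The essential new point is that the \emph{range} space dictates the shape of the norming points, so parts (I) and (II) must be handled separately even though the computations run in parallel with the earlier proofs.

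For part (I), I would first record the norm-attainment description for $T \in \mathcal{L}(\ell_p, c_0)$, in the spirit of Lemma~\ref{lem:diag_norm}: one checks that $\|T\| = \sup_n |\alpha_n| = 1$, and since $Tx \in c_0$ the quantity $\|Tx\|_\infty = \sup_n |\alpha_n x(n)|$ is actually a maximum. Hence $\|Tx\|_\infty = 1$ forces $|\alpha_{n_0}| = |x(n_0)| = 1$ for some $n_0$, and as $x \in S_{\ell_p}$ this compels $x = e^{i\phi} e_{n_0}$ with $n_0 \in J$. With this at hand, $(a) \Rightarrow (b)$ follows the $\ell_p$ argument of Theorem~\ref{theo:diag_norm}: norm attainment gives $J \neq \es$, and if $J \neq \N$ while $\sup_{\N \setminus J} |\alpha_n| = 1$, then choosing $n_k \in \N \setminus J$ with $|\alpha_{n_k}| \to 1$ makes $\|T e_{n_k}\|_\infty$ arbitrarily close to $1$, whereas every norming point $e^{i\phi} e_m$ ($m \in J$) lies at distance $2^{1/p}$ from $e_{n_k}$, a contradiction. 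For $(b) \Rightarrow (a)$, given $x$ with $\|Tx\|_\infty > 1 - \eta$ I would locate a maximizing index $n_0$; taking $\eta$ small (and, when $J \neq \N$, smaller than $1 - \sup_{\N\setminus J}|\alpha_n|$) forces $n_0 \in J$ and $|x(n_0)| > 1 - \eta$, so that $x_0 := \frac{x(n_0)}{|x(n_0)|} e_{n_0}$ is a norming point with $\|x_0 - x\|_p^p = (1 - |x(n_0)|)^p + (1 - |x(n_0)|^p)$, which tends to $0$ as $\eta \to 0$.

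For part (II), I would again start from norm attainment for $T \in \mathcal{L}(c_0, \ell_p)$. Since $\|T\| = \|(\alpha_n)\|_p = 1$ and $\|Tx\|_p^p = \sum_n |\alpha_n|^p |x(n)|^p \le \sum_n |\alpha_n|^p$ using $|x(n)| \le 1$, equality requires $|x(n)| = 1$ for every $n \in \supp(\alpha)$; as $x \in c_0$, this is possible only when $\supp(\alpha)$ is finite. This already yields $(a) \Rightarrow (b)$, because membership in $\mathcal{A}_{\|\cdot\|}$ presupposes norm attainment. For $(b) \Rightarrow (a)$, setting $m := \min\{|\alpha_n|^p : n \in \supp(\alpha)\} > 0$, from $\|Tx\|_p^p > (1-\eta)^p$ I would deduce $1 - |x(n)|^p < m^{-1}\big(1 - (1-\eta)^p\big)$ for each $n$ in the support, then define $x_0(n) := x(n)/|x(n)|$ on $\supp(\alpha)$ and $x_0(n) := x(n)$ elsewhere. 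This $x_0$ lies in $S_{c_0}$, attains the norm, and satisfies $\|x_0 - x\|_\infty = \max_{n \in \supp(\alpha)}(1 - |x(n)|) \le \max_{n\in\supp(\alpha)}(1 - |x(n)|^p) < \eps$ once $\eta$ is chosen small.

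The step I expect to be the main obstacle is getting these two norm-attainment descriptions exactly right, since the $c_0$ range in part (I) makes norm attainment far more restrictive (only single-coordinate vectors) than in the $\ell_p \to \ell_p$ case of Theorem~\ref{theo:diag_norm}, while the $\ell_p$ range in part (II) collapses the whole problem into a finite-support condition. In particular, the positivity of the constant $m$ in $(b) \Rightarrow (a)$ of part (II) is precisely the place where the equivalence genuinely uses that $(\alpha_n)$ is \emph{finitely supported} rather than merely $p$-summable.
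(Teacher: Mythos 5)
Your proposal is correct and takes essentially the route the paper intends: the paper states this theorem without proof, saying only that "similar techniques as in Theorem \ref{theo:diag_norm} and Theorem \ref{theo:diag_nu}" yield it, and your argument is exactly that adaptation — first pinning down the norming points (multiples $e^{i\phi}e_{n_0}$ with $n_0 \in J$ in part (I), and the finite-support obstruction forced by $x \in c_0$ in part (II)), then running the same $\eta$--$\eps$ arguments as in the diagonal $\ell_p \to \ell_p$ case. Your quantitative estimates (the bound $(1-|x(n_0)|)^p + (1-|x(n_0)|^p)$ in (I), and the use of $m = \min_{n \in \supp(\alpha)}|\alpha_n|^p > 0$ in (II)) are sound, so the proof is complete.
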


The previous theorems provide a wide class of operators that belong to our sets. For instance, the canonical projections $P_N \in \mathcal{L}(X) $ belong to both $\mathcal{A}_{\| \cdot \|}(X, X)$ and $\mathcal{A}_{\numerical}(X)$ for the Banach spaces $X=c_0$ or $\ell_p$, with $1\leq p < \infty$, and to $\mathcal{A}_{\| \cdot \|}(X, X)$ when $X=\ell_\infty$.

\begin{cor} \label{projection} Let $N \in \N$ be given.
\begin{itemize}
\item[(1)] $P_N \in \mathcal{A}_{\|\cdot\|}(c_0, c_0)$ and $P_N \in \mathcal{A}_{\|\cdot\|}(\ell_p, \ell_p)$ for $1 \leq p \leq \infty$. 
\item[(2)] $P_N \in \mathcal{A}_{\numerical}(c_0)$ and $P_N \in \mathcal{A}_{\numerical}(\ell_p)$ for $1 \leq p < \infty$.
\end{itemize}
\end{cor}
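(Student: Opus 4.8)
The plan is to observe that each canonical projection $P_N$ is itself a diagonal operator, and then to apply the two characterization results, Theorem \ref{theo:diag_norm} and Theorem \ref{theo:diag_nu}, directly. Concretely, on any of the spaces $c_0$ or $\ell_p$, the operator $P_N$ acts as $P_N x = (\alpha_n x(n))_{n=1}^\infty$ with the bounded scalar sequence
\[
\alpha_n = \begin{cases} 1 & \text{if } 1 \leq n \leq N, \\ 0 & \text{if } n > N. \end{cases}
\]
Since $\sup_{n \in \N} |\alpha_n| = 1$, the operator $P_N$ has norm one, so both characterization theorems apply once we verify condition (b) in each of them.

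First I would prove item (1). Set $J = \{ n \in \N : |\alpha_n| = 1 \}$, which is exactly $\{1, \ldots, N\}$; in particular $J$ is nonempty, giving condition (b)(1) with $n_0 = 1$. Moreover $J \neq \N$, and $\sup_{n \in \N \setminus J} |\alpha_n| = \sup_{n > N} 0 = 0 < 1$, which is precisely condition (b)(2). Thus Theorem \ref{theo:diag_norm} yields $P_N \in \mathcal{A}_{\|\cdot\|}(X, X)$ for $X = c_0$ and for $X = \ell_p$ with $1 \leq p \leq \infty$, establishing item (1).

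For item (2), I would check the slightly different condition (b) of Theorem \ref{theo:diag_nu}, valid for $X = c_0$ or $\ell_p$ with $1 \leq p < \infty$. Condition (b)(1) holds as above. For (b)(2), the image set $\{ \alpha_n : n \in J \} = \{1\}$ is finite (indeed a singleton), and again $J \neq \N$ with $\sup_{n \in \N \setminus J} |\alpha_n| = 0 < 1$. Hence Theorem \ref{theo:diag_nu} gives $P_N \in \mathcal{A}_{\numerical}(X)$ for these spaces, which is item (2).

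I do not expect any genuine obstacle here: the result is a direct corollary, and the only substantive point is the elementary observation that the projections are diagonal operators whose defining sequence satisfies the hypotheses of the preceding theorems trivially. The restriction to $1 \leq p < \infty$ in item (2) (and the absence of the $\ell_\infty$ case there) simply reflects the range of validity of Theorem \ref{theo:diag_nu}, so no extra argument is needed.
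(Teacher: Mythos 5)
Your proposal is correct and takes essentially the same approach as the paper, which states the corollary as a direct consequence of Theorems \ref{theo:diag_norm} and \ref{theo:diag_nu} applied to the diagonal sequence $\alpha_n = 1$ for $n \leq N$ and $\alpha_n = 0$ for $n > N$. The only cosmetic addition worth making is that Theorem \ref{theo:diag_nu} is stated for operators of numerical radius one, so one should note that $\nu(P_N) = 1$, which is immediate from $\nu(P_N) \leq \|P_N\| = 1$ together with $\langle e_1^*, P_N e_1 \rangle = 1$ for the state $(e_1, e_1^*) \in \Pi(X)$.
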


%%%%%%%%%%%%%%%%%%%%%%%%%%%%%%%%%%%%%%%%%%%%%%%%%%%%%%%%%%%%%%%%
\section{Connecting the sets $\mathcal{A}_{\|\cdot\|}$ and $\mathcal{A}_{nu}$}
%%%%%%%%%%%%%%%%%%%%%%%%%%%%%%%%%%%%%%%%%%%%%%%%%%%%%%%%%%%%%%%%

In this section, we introduce a natural approach to connect the sets $\A_{\|\cdot\|}$ and $\A_{\numerical}$ through direct sums. Throughout the section, we will be using the following notation. Given two Banach spaces $X_1$ and $X_2$, consider the mappings $P_i \in \mathcal{L}( X_1\oplus X_2 , X_i)$ such that $P_i(x_1,x_2):=x_i$, $i=1, 2$, and $\iota_j \in \mathcal{L}(X_j , X_1\oplus X_2)$ such that $\iota_i(x):=x e_i$, where $e_1=(1,0)$ and $e_2=(0,1)$. For Banach spaces $W$ and $Z$, if we have an operator $T \in \mathcal{L}(W, Z)$, then there is the simplest way to define $\widetilde{T} \in \mathcal{L}( W \oplus Z)$: consider $\widetilde{T} := \iota_2 \circ T \circ P_1$,  that is, $\widetilde{T} (w,z) = (0, Tw)$ for every $(w,z) \in W \oplus Z$. Conversely, we can define a pseudo-inverse process as follows: if we have an operator $S \in \mathcal{L} (W \oplus Z)$, then we can consider $\widecheck{S} \in \mathcal{L}( W,  Z)$ defined as $\widecheck{S} := P_2 \circ S \circ \iota_1$, that is, $\widecheck{S} (w) = (P_2 \circ S) (w,0)$ for every $w \in W$. We start with the following result, which establishes a bond between the assertions $T\in \mathcal{A}_{\|\cdot\|}(W,Z)$ and $\widetilde{T}\in \mathcal{A}_{\numerical} (W \oplus_1 Z)$ under some assumptions on the spaces.

\begin{prop}\label{propsums1} Let $W$ and $Z$ be two Banach spaces, and let $T\in S_{\mathcal{L}(W, Z)}$. Then,
\begin{itemize}
\item[(a)] If $\widetilde{T}\in \Anusum$, then $T\in \AnoWZ$.
\item[(b)] Suppose that $W$ and $Z$ are uniformly smooth Banach spaces. If $T \in \AnoWZ$, then $\widetilde{T} \in \Anusum$.
\end{itemize}
\end{prop}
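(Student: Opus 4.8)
The plan is to first unravel the structure of the states of $W \oplus_1 Z$ and to record that $\nu(\widetilde{T}) = \|T\| = 1$, so that membership in $\Anusum$ makes sense. Since $(W \oplus_1 Z)^* = W^* \oplus_\infty Z^*$, a pair $((w,z),(w^*,z^*))$ lies in $\Pi(W \oplus_1 Z)$ precisely when $\|w\| + \|z\| = 1$, $\max\{\|w^*\|,\|z^*\|\} = 1$, and, forced by the equality $w^*(w) + z^*(z) = 1$ together with $\|w^*\|,\|z^*\| \leq 1$, one has $w^*(w) = \|w\|$ and $z^*(z) = \|z\|$ (both real and equal to the respective norms). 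Moreover $\langle (w^*,z^*), \widetilde{T}(w,z)\rangle = z^*(Tw)$, whence $|z^*(Tw)| \leq \|z^*\|\,\|Tw\| \leq \|Tw\| \leq \|w\| \leq 1$. Evaluating at $(w,0)$ with $\|w\|=1$ and $z^*$ norming $Tw$ shows $\nu(\widetilde{T}) = \|T\| = 1$, and records the chain of inequalities that will be driven to equality below.

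For item (a) I would simply set $\eta(\varepsilon, T) := \eta(\varepsilon, \widetilde{T})$. Given $w \in S_W$ with $\|Tw\| > 1 - \eta(\varepsilon, \widetilde{T})$, choose $w^* \in S_{W^*}$ norming $w$ and $z^* \in S_{Z^*}$ norming $Tw$; then $((w,0),(w^*,z^*)) \in \Pi(W \oplus_1 Z)$ and $|z^*(Tw)| = \|Tw\| > 1 - \eta(\varepsilon, \widetilde{T})$. The hypothesis $\widetilde{T} \in \Anusum$ produces a state $((w_1,z_1),(w_1^*,z_1^*))$ attaining the numerical radius and $\varepsilon$-close to the previous one. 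Forcing the chain above to equality at $|z_1^*(Tw_1)| = 1$ gives $\|w_1\| = 1$, $z_1 = 0$ and $\|Tw_1\| = 1$; hence $T$ attains its norm at $w_1 \in S_W$ with $\|w_1 - w\| \leq \|(w_1,z_1) - (w,0)\|_1 < \varepsilon$, so $T \in \AnoWZ$.

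For item (b) the real work is in producing the dual data. From $|z^*(Tw)| > 1 - \eta$ the inequalities above force $\|w\|,\|Tw\| > 1 - \eta$ and $\|z\| = 1 - \|w\| < \eta$. Normalizing $\widehat{w} := w/\|w\|$ gives $\widehat{w} \in S_W$ with $\|T\widehat{w}\| > 1 - \eta$, so applying $T \in \AnoWZ$ (with $\eta$ chosen small enough that $\eta \leq \eta(\delta, T)$ for a suitable $\delta$) yields $w_1 \in S_W$ with $\|Tw_1\| = 1$ and $\|w_1 - \widehat{w}\| < \delta$; together with $\|\widehat{w} - w\| = 1 - \|w\| < \eta$, the point $(w_1,0)$ lies within $\delta + 2\eta$ of $(w,z)$ in $W \oplus_1 Z$. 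It remains to select norming functionals: take $w_1^*$ norming $w_1$, and, writing $z^*(Tw) = r e^{i\phi}$, let $v^*$ norm $Tw_1$ and put $z_1^* := e^{i\phi} v^*$, so that $|z_1^*(Tw_1)| = 1$ and $((w_1,0),(w_1^*,z_1^*))$ attains $\nu(\widetilde{T})$. I then verify the dual closeness $\max\{\|w_1^*-w^*\|,\|z_1^*-z^*\|\} < \varepsilon$.

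The delicate point, and the reason both $W$ and $Z$ are assumed uniformly smooth, is exactly this dual estimate. Since $w^*(w) = \|w\|$ is real, $w^*$ almost norms $w_1$ (as $\re w^*(w_1) \geq 1 - \|w_1 - \widehat{w}\|$), and since $W^*$ is uniformly convex the standard modulus-of-convexity argument of Proposition \ref{adjoint}.(i) forces $\|w_1^* - w^*\| < \varepsilon$ once $\delta$ is small relative to $\delta_{W^*}(\varepsilon)$. For $z_1^*$ one must first absorb a phase, which is the subtle obstacle: the numerical-radius condition controls only $|z^*(Tw)|$, not its argument, so it is $e^{-i\phi}z^*$ that almost norms $\widehat{Tw} = Tw/\|Tw\|$, and hence also $Tw_1$ (which is close to $\widehat{Tw}$). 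Uniform convexity of $Z^*$ then places $v^*$ within $\varepsilon$ of $e^{-i\phi}z^*$, i.e.\ $\|z_1^* - z^*\| = \|e^{-i\phi}z^* - v^*\| < \varepsilon$, completing the verification. Thus $\widetilde{T} \in \Anusum$.
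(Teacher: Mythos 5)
Your proof is correct and follows essentially the same route as the paper's: part (a) by setting $\eta(\e,T):=\eta(\e,\widetilde{T})$ and driving the chain $|z_1^*(Tw_1)| \le \|Tw_1\| \le \|w_1\| \le \|w_1\|+\|z_1\|=1$ to equality, and part (b) by producing the new state $((w_1,0),(w_1^*,z_1^*))$ with norming functionals and using uniform convexity of $W^*$ and $Z^*$ (via the standard midpoint estimate) for the dual closeness. If anything, you are slightly more careful than the paper in part (b): you normalize $w$ to a unit vector before invoking $T\in\AnoWZ$, whereas the paper applies the definition directly to $w_1$, which need not lie in $S_W$.
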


\begin{proof}
(a). Assume $\widetilde{T} \in \Anusum$ and for a given $\e > 0$, set $\eta(\e, T) := \eta(\e, \widetilde{T}) > 0$. Pick $w_0 \in S_W$ to be such that $\|T(w_0)\| > 1 - \eta(\e, T).$
Let $z_0^* \in S_{Z^*}$ be such that $|\langle z_0^*, T(w_0)\rangle | = \|T(w_0)\| > 1 - \eta(\e, T)$. Let $w_0^* \in S_{W^*}$ be such that $\langle w_0^*, w_0\rangle = 1$ and consider the point $(\langle (w_0^*, z_0^*), (w_0, 0)\rangle \in \Pi (W \oplus_1 Z).$
Since $\widetilde{T} \in \Anusum$ and
\begin{equation*}
| \langle (w_0^*, z_0^*), \widetilde{T}(w_0, 0) \rangle | = |\langle z_0^*, T(w_0)\rangle | > 1 - \eta(\e, T) = 1 - \eta(\e, \widetilde{T}),
\end{equation*}	
there is $\langle (w_1^*, z_1^*), (w_1, z_1)\rangle \in \Pi(W \oplus_1 Z)$ such that
\begin{equation*}
\nu (\widetilde{T}) = | \langle (w_1^*, z_1^*), \widetilde{T} (w_1, z_1) \rangle|, \ \  \|(w_1, z_1) - (w_0, 0)\|_1 < \e \ \ \mbox{and} \ \ \|(w_1^*, z_1^*) - (w_0^*, z_0^*) \|_{\infty} < \e. 
\end{equation*}	
So $1 = | \langle (w_1^*, z_1^*), \widetilde{T} (w_1, z_1) \rangle| = |\langle z_1^*, T(w_1)\rangle | \leq \|z_1^*\| \|T(w_1)\| \leq 1$. This implies that $\|T(w_1)\| = 1$ and that $z_1 = 0$. So $\|w_1 - w_0\| < \e$. This proves that $T \in \AnoWZ$.

(b). Suppose $T \in \AnoWZ$. It is plain to check that $\widetilde{T}$ attains its numerical radius and $ \nu (\widetilde{T}) = 1$. 
%Indeed, note that $ \nu (\widetilde{T}) \leq \|\widetilde{T}\| = \|T\| = 1$. On the other hand, if $w_0 \in S_W$ is such that $\|T(w_0)\| = \|T\| = 1$, then we take $z_0^* \in S_{Z^*}$ to be such that $|z_0^*(T(w_0))| = \|T(w_0)\| = 1$ and we consider the point $((w_0, 0), (w_0^*, z_0^*)) \in \Pi(W \oplus_1 Z)$, where $w_0^* \in S_{W^*}$ is such that $w_0^*(w_0) = 1$. So $1 = |z_0^*(T(w_0))| = | \langle \widetilde{T} (w_0, 0), (w_0^*, z_0^*) \rangle|$. 
Given $\e \in (0, 1)$, we set
\begin{equation*}
\eta(\e, \widetilde{T}) := \min \left\{ \eta \left( \min \left\{ \frac{\delta_{W^*}(\e)}{2}, \frac{\delta_{Z^*}(\e)}{2}, \frac{\e}{2} \right\}, T \right), \frac{\delta_{W^*}(\e)}{2}, \frac{\delta_{Z^*}(\e)}{2}, \frac{\e}{2} \right \} > 0, 
\end{equation*}
where $\e \mapsto \delta_{W^*}(\e)$ and $\e \mapsto \delta_{Z^*} (\e)$ are the modulus of convexity of $W^*$ and $Z^*$, respectively. Let $((w_1, z_1), (w_1^*, z_1^*)) \in \Pi(W \oplus_1 Z)$ be such that
\begin{equation*}
| \langle z_1^*, T(w_1) \rangle | = \left|\langle (w_1^*, z_1^*), \widetilde{T}(w_1, z_1) \rangle \right|> 1 - \eta(\e, \widetilde{T}). 
\end{equation*}	
As we have 
\begin{equation*}
\|T(w_1)\| \geq \left| \langle z_1^*, T(w_1) \rangle \right| > 1 - \eta \left( \min \left\{ \frac{\delta_{W^*}(\e)}{2}, \frac{\delta_{Z^*}(\e)}{2}, \frac{\e}{2} \right\}, T \right),
\end{equation*}
there is $w_2 \in S_W$ such that
\begin{equation*}  
\|T(w_2)\| = 1 \ \ \ \mbox{and} \ \ \ \|w_2 - w_1\| < \min \left\{ \frac{\delta_{W^*}(\e)}{2}, \frac{\delta_{Z^*}(\e)}{2}, \frac{\e}{2} \right\}.
\end{equation*}
Since $\|w_1\| \geq | \langle z_1^*, T(w_1) \rangle | > 1 - \eta(\e, \widetilde{T})$, we have that $\|z_1\| < \eta(\e, \widetilde{T})$. Let $w_2^* \in S_{W^*}$ be such that $ \langle w_2^*, w_2 \rangle = 1$, then
\begin{equation*}
\left| \frac{ \langle z_1^*, z_1 \rangle - \langle w_1^*, w_2 - w_1 \rangle }{2} \right| \leq | \langle z_1^*, z_1 \rangle | + \|w_1^*\|\|w_2 - w_1\| < \delta_{W^*}(\e).	
\end{equation*}
So, we have
\begin{eqnarray*}
\left\| \frac{w_1^* + w_2^*}{2} \right\| \geq \left| \left\langle \frac{w_1^* + w_2^*}{2}, w_2 \right\rangle \right| &=& \left|\frac{2 - \langle z_1^*, z_1 \rangle + \langle w_1^*, w_2 - w_1 \rangle }{2} \right| \\
%&\geq& \left| 1 - \left( \frac{z_1^* (z_1)-w_1^*(w_2-w_1)}{2} \right) \right| \\
&\geq& 1 - \left| \left( \frac{ \langle z_1^*, z_1 \rangle - \langle w_1^*, w_2-w_1 \rangle }{2} \right) \right| > 1 - \delta_{W^*} (\eps),
\end{eqnarray*}
which implies that $\|w_2^* - w_1^*\| < \e.$

Let $\theta \in \mathbb{R}$ be such that $ \langle z_1^*, T(w_2) \rangle = e^{i\theta} | \langle z_1^*, T(w_2) \rangle | $. Notice that
\begin{eqnarray*}
| \langle z_1^*, T(w_2) \rangle |  \geq | \langle z_1^*, T(w_1) \rangle | - | \langle z_1^*, T(w_2 - w_1) \rangle | \geq 1 - \delta_{Z^*}(\e).	
\end{eqnarray*}	
Now, let $z_2^* \in S_{Z^*}$ be such that 
$\langle z_2^*, T(w_2) \rangle = e^{i\theta}$.
Observe that 
$$
\left\| \frac{z_1^* + z_2^*}{2} \right\| \geq \left| \left\langle \frac{z_1^* + z_2^*}{2}, T(w_2) \right\rangle \right| = \frac{1 + | \langle z_1^*, T(w_2) \rangle |}{2} > 1 - \delta_{Z^*} (\eps); 
$$
hence $\|z_2^* - z_1^*\| < \e.$
Finally, considering the point $((w_2, 0), (w_2^*, z_2^*)) \in \Pi(W \oplus_1 Z)$, we conclude that $\widetilde{T} \in \Anusum$.
\end{proof}

\begin{remark}\label{counterexsum1}
Proposition \ref{propsums1}.(b) no longer holds in general if we consider arbitrary Banach spaces instead of uniformly smooth ones. Indeed, consider the real Banach space $\ell_1$. Example \ref{ex3} provides an operator that belongs to $\mathcal{A}_{\| \cdot \|} (\ell_1, \ell_1)$ but not to $\mathcal{A}_{\numerical}(\ell_1)$. We will show that this operator does not satisfy the property stated in Proposition \ref{propsums1}.(b). Indeed, let $S \in \mathcal{L}(\ell_1)$ be the operator defined in Example \ref{ex3}. Note that if $((x,y),(x^*,y^*))\in\Pi(\ell_1 \oplus_1 \ell_1)$ satisfies
\begin{equation}\label{sumcondition}
|\langle (x^*,y^*),\widetilde{S}(x,y) \rangle|=|\langle y^*, S(x)\rangle|=\left| \sum_{j=1}^{\infty} \frac{y^*(j) x(1)}{2^j}\right|=1,
\end{equation}
then, one gets easily that $y^*(j)x(1)$ has to be equal to either $1$ or $-1$ for all $j\in \mathbb{N}$. From here, we get that the only possibilities have the form $x = se_1$, $y=0$, $x^* = (s, x^*(2), x^*(3), \ldots)$, and $y^* = (r,r,r,\ldots)$ with $|x^*(j)|\leq 1$ for all $j>1$, where $s,r\in \{ -1, 1\}$. 
Now, suppose by contradiction that for a given $\varepsilon \in (0, 1)$, there is $\eta(\varepsilon, \widetilde{S}) > 0$. Let $n_0 \in \mathbb{N}$ be such that
$\sum_{j=1}^{n_0} \frac{1}{2^j} > 1 - \eta(\varepsilon, \widetilde{S})$,
and set $w=e_1$, $z=0$, $w^*=e_1^*$, and $z^* = e_1^*+\ldots+e_{n_0}^*$. It is immediate to check that $((w,z),(w^*,z^*))\in \Pi(\ell_1 \oplus_1 \ell_1)$ and also that $|\langle (w^*, z^*), \widetilde{S}(w,z) \rangle | >1-\eta(\varepsilon, \widetilde{S})$. Then, there must be some $((x,y),(x^*,y^*))\in \Pi(\ell_1 \oplus_1 \ell_1)$ satisfying \eqref{sumcondition} and such that $\| (w,z)-(x,y) \|_1 < \varepsilon$ and $\| (w^*,z^*)-(x^*,y^*) \|_\infty < \varepsilon$. But this is already a contradiction, since
$\| (x^*-w^*,y^*-z^*) \|_\infty \geq \| y^*-z^*\|_\infty \geq 1.$
Therefore $\widetilde{S} \notin \mathcal{A}_{\numerical}(\ell_1 \oplus_1 \ell_1)$ as desired, even though $S \in \mathcal{A}_{\| \cdot \|}(\ell_1, \ell_1)$.
\end{remark}

\begin{remark}\label{remarksums2}
There exists an operator $S \in \mathcal{L} (W\oplus_1 Z)$, with both $W$ and $Z$ being uniformly smooth Banach spaces, such that $S\in \Anusum$ but $\widecheck{S}\notin \AnoWZ$ (note that this does not contradict Proposition \ref{propsums1}, since our $S$ is not of the form $\widetilde{T}$ for any operator $T$). Indeed, let $S  \in \mathcal{L} ( \ell_2 \oplus_1 \ell_2 )$ be defined as 
$$
S(x,y) = ( ( x(1) , 0, 0, \cdots), (0,0,0,\cdots)), \quad \forall (x,y) \in \ell_2 \oplus_1 \ell_2, 
$$
where $\ell_2$ is a real space.
Note that $\nu (S) = 1$ and $S$ attains its numerical radius. For $\eps \in (0,1)$, suppose that $| \langle (x^*, y^*), S(x,y) \rangle | > 1 -\eps >0$ for some $((x,y),(x^*,y^*)) \in \Pi (\ell_2\oplus_1\ell_2)$. Then $| x(1) | > 1-\eps$, $| x^*(1) | > 1-\eps$ and $\|y\| < \eps$. Note also that 
\begin{eqnarray*}
1 \geq  | x(1) |^2 + \sum_{n\neq 1} | x(n) |^2 \geq | x(1) |^2 > (1-\eps)^2 
\end{eqnarray*}
which implies that $(\sum_{n\neq 1} | x(n) |^2)^{1/2} < (2\eps - \eps^2)^{1/2}$. %Since $\| (x^*, y^* ) \| = \max \left\{ \left(\sum_{n} |x_n^*|^2 \right)^{1/2}, \left(\sum_{n} |y_n^*|^2 \right)^{1/2} \right\} = 1$, 

On the other hand,
\begin{eqnarray*}
1 = \sum_{n} x(n) x^*(n) + \sum_{n} y(n) y^*(n) \leq \|x\| \|x^*\| +  \|y\|\|y^*\| \leq \|x\| + \|y\| =1
\end{eqnarray*}
From this, we have $\|x^*\| = \|y^*\| = 1$. As above, we can see that $(\sum_{n\neq 1} | x^*(n) |^2)^{1/2} < (2\eps - \eps^2)^{1/2}$. If we define pairs of vectors 
\begin{eqnarray*}
(\widetilde{x}, \widetilde{y}) = \left( \left( \frac{ x(1) }{| x(1) |}, 0, 0, \cdots \right), 0 \right) \ \  \mbox{and} \ \  (\widetilde{x^*}, \widetilde{y^*}) = \left( \left( \frac{ x^*(1) }{| x^*(1) |}, 0, 0, \cdots \right), y^* \right), 
\end{eqnarray*} 
then $\| ({x}, {y})-(\widetilde{x}, \widetilde{y})\| \leq \eps + \sqrt{2\eps}$ and $\| ({x^*}, {y^*})-(\widetilde{x^*}, \widetilde{y^*})\| \leq \sqrt{2\eps}$. 

It is clear that $\left( (\widetilde{x}, \widetilde{y}), (\widetilde{x^*}, \widetilde{y^*}) \right) \in \Pi(\ell_2 \oplus_1 \ell_2)$ and that $|\langle (\widetilde{x^*}, \widetilde{y^*}), S(\widetilde{x},\widetilde{y}) \rangle| =1$. This proves that $S$ belongs to $\mathcal{A}_{\numerical} (\ell_2 \oplus_1 \ell_2)$. However, $\widecheck{S} \in \mathcal{L}( \ell_2 )$ is the operator such that 
\begin{align*}
\widecheck{S} x = (P_2 \circ S) (x,0) = P_2 ( ( x(1) ,0,0,\cdots), (0,0,0,\cdots)) = 0
\end{align*} 
for every $x \in \ell_2$; hence $\widecheck{S} = 0$, and the null operator cannot belong to $\A_{\|\cdot\|} (\ell_2, \ell_2)$. 
\end{remark}

We proceed now to prove the analogous results for $\ell_{\infty}$-sums but under different hypothesis on the underlying spaces.

\begin{prop}\label{propsums2} Let $W$ and $Z$ be two Banach spaces, and let $T\in S_{\mathcal{L}(W, Z)}$. Then:
\begin{itemize}
\item[(a)] If $\widetilde{T}\in \Anusuminf$, then $T\in \AnoWZ$.
\item[(b)] Suppose that $Z$ is a uniformly convex Banach space and $W$ is a uniformly smooth Banach spaces. If $T \in \AnoWZ$, then $\widetilde{T} \in \Anusuminf$.
\end{itemize}
\end{prop}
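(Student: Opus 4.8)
The plan is to mirror Proposition \ref{propsums1}(a). Assume $\widetilde{T}\in\Anusuminf$ (so $\nu(\widetilde{T})=1$) and set $\eta(\e,T):=\eta(\e,\widetilde{T})$. Given $w_0\in S_W$ with $\|Tw_0\|>1-\eta(\e,T)$, I would pick $z_0^*\in S_{Z^*}$ norming $Tw_0$ and put $z_0:=Tw_0/\|Tw_0\|\in S_Z$, so that $((w_0,z_0),(0,z_0^*))\in\Pi(W\oplus_\infty Z)$ and $|\langle(0,z_0^*),\widetilde{T}(w_0,z_0)\rangle|=\|Tw_0\|>1-\eta(\e,\widetilde{T})$. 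Feeding this into the definition of $\Anusuminf$ yields a state $((w_1,z_1),(w_1^*,z_1^*))$ at which $\widetilde{T}$ attains its numerical radius, within $\e$ of the test state. The elementary observation I would record once and reuse is that, in $W\oplus_\infty Z$, the equalities $|\langle z^*,Tw\rangle|=\nu(\widetilde{T})=1$, $\|w^*\|+\|z^*\|=1$ and $\|Tw\|\le\|w\|\le 1$ force $w^*=0$, $\|z^*\|=1$, and $\|Tw\|=\|w\|=1$. Applying this to the attaining state gives $\|Tw_1\|=\|w_1\|=1$, so $T$ attains its norm at $w_1\in S_W$ with $\|w_1-w_0\|\le\|(w_1,z_1)-(w_0,z_0)\|_\infty<\e$; hence $T\in\AnoWZ$.

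\textbf{Plan for (b).} First I would check that $\nu(\widetilde{T})=\|T\|=1$ and that $\widetilde{T}$ attains its numerical radius, using a norming point $w_0$ of $T$ (which exists since $T\in\AnoWZ$) and the state $((w_0,Tw_0),(0,z_0^*))$. Then, following the template of Proposition \ref{propsums1}(b), for $\e\in(0,1)$ I would take $\eta(\e,\widetilde{T})$ to be a minimum involving $\eta(\cdot,T)$ and the relevant moduli, and start from a state $((w_1,z_1),(w_1^*,z_1^*))\in\Pi(W\oplus_\infty Z)$ with $|\langle z_1^*,Tw_1\rangle|>1-\eta(\e,\widetilde{T})$. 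The approximate form of the forcing observation immediately yields $\|w_1^*\|<\eta$, $\|z_1^*\|>1-\eta$, and $\|Tw_1\|,\|w_1\|,\|z_1\|>1-\eta$, together with $\langle z_1^*,z_1\rangle>1-\eta$; thus, writing $\zeta^*:=z_1^*/\|z_1^*\|$, both $z_1/\|z_1\|$ and $Tw_1/\|Tw_1\|$ nearly norm $\zeta^*$. Applying $T\in\AnoWZ$ to $w_1/\|w_1\|$ then produces $w_2\in S_W$ with $\|Tw_2\|=1$ and $\|w_2-w_1\|$ small.

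The heart of the matter is to upgrade $w_2$ to a numerical-radius-attaining state $((w_2,z_2),(0,z_2^*))$ close to the original. For the primal $Z$-coordinate I would set $z_2:=Tw_2\in S_Z$: since $z_2=Tw_2$, $Tw_1/\|Tw_1\|$ and $z_1/\|z_1\|$ are all (near-)norming points of the single functional $\zeta^*$, the uniform convexity of $Z$ forces them to be mutually close, giving $\|z_2-z_1\|<\e$. It then remains to produce $z_2^*\in S_{Z^*}$ norming $Tw_2$ with $\|z_2^*-z_1^*\|<\e$, and \emph{this dual estimate is the main obstacle}. I would attack it by an averaging argument transported to the domain: any functional $z_2^*$ norming $Tw_2$ has $T^*z_2^*$ equal to the unique norming functional of $w_2$ (unique because $W$ is smooth), while $T^*\zeta^*$ nearly norms $w_2$, so the uniform convexity of $W^*$ (equivalently, the uniform smoothness of $W$) controls $\|T^*(\zeta^*-z_2^*)\|$ through $\delta_{W^*}$. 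The delicate point — and the crux of the whole proposition — is to convert this control in $W^*$ into the sought closeness of $z_2^*$ to $z_1^*$ in $Z^*$; this is the exact analogue of the $\delta_{Z^*}$-step of Proposition \ref{propsums1}(b), now read through the adjoint rather than directly, and I expect it to be where the real work lies.

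Once $\|z_2^*-z_1^*\|<\e$ is secured, the assembly is routine: $((w_2,z_2),(0,z_2^*))$ is a state of $\Pi(W\oplus_\infty Z)$ because $\langle z_2^*,z_2\rangle=\langle z_2^*,Tw_2\rangle=1$, it attains $\nu(\widetilde{T})=1$, and it lies within $\e$ of the original in both coordinates once one recalls $\|w_1^*\|<\eta$; hence $\widetilde{T}\in\Anusuminf$. Finally, the complex case would be handled exactly as in Proposition \ref{propsums1}(b), by absorbing the phase $e^{i\theta}$ of $\langle z_1^*,Tw_2\rangle$ into the choice of $z_2^*$ before running the averaging estimate.
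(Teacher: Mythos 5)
Part (a) of your proposal is correct and follows the paper's route; it is in fact a little cleaner, since you take $z_0:=Tw_0/\|Tw_0\|$ directly, whereas the paper makes an unnecessary detour through the Bishop--Phelps theorem (unnecessary because a functional that norms $Tw_0$ automatically attains its norm at $Tw_0/\|Tw_0\|$, so your test state $((w_0,z_0),(0,z_0^*))\in\Pi(W\oplus_\infty Z)$ exists without any perturbation).

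In part (b) there is a genuine gap, and it sits exactly where you flagged it: you never produce $z_2^*\in S_{Z^*}$ norming $Tw_2$ with $\|z_2^*-z_1^*\|<\e$. The paper closes this step not through the adjoint but directly in $Z^*$: writing $\langle z_1^*,Tw_2\rangle=|\langle z_1^*,Tw_2\rangle|e^{i\theta}$ and choosing $z_2^*\in S_{Z^*}$ with $\langle z_2^*,Tw_2\rangle=e^{i\theta}$, both functionals (one exactly, one almost, and with the same phase) norm the \emph{common} vector $Tw_2\in S_Z$, so
\[
\left\|\frac{z_1^*+z_2^*}{2}\right\|\;\geq\;\left|\left\langle \frac{z_1^*+z_2^*}{2},\,Tw_2\right\rangle\right|\;=\;\frac{|\langle z_1^*,Tw_2\rangle|+1}{2},
\]
and uniform convexity of $Z^*$ (the modulus $\delta_{Z^*}$) then forces $\|z_2^*-z_1^*\|$ to be small. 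Your adjoint route cannot be repaired into this. From $T^*z_2^*=$ the unique norming functional of $w_2$ and $T^*\zeta^*\approx$ that same functional, you only control $\|T^*(\zeta^*-z_2^*)\|_{W^*}$; since $T^*$ admits no lower bound in general (take $T$ of rank one: $T^*$ annihilates a closed hyperplane of $Z^*$), no estimate in $W^*$ can ever yield that $\|\zeta^*-z_2^*\|_{Z^*}$ is small. Smoothness of $W$ is simply the wrong tool here; the paper's proof of (b) never uses it.

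Your analysis does expose a real soft spot, though: the paper's argument needs $\delta_{Z^*}$ to be a genuine (positive) modulus, i.e.\ it needs $Z^*$ uniformly convex, equivalently $Z$ uniformly smooth, and this does \emph{not} follow from the stated hypothesis that $Z$ is uniformly convex (uniform convexity does not imply smoothness). So the hypotheses under which the paper's proof of (b) actually runs are ``$Z$ uniformly convex and uniformly smooth'' (UC for the primal estimate via $\delta_Z$, US for the dual estimate via $\delta_{Z^*}$), while the stated assumption on $W$ plays no role. Your instinct that the dual estimate is the crux was right; but its resolution is convexity of $Z^*$ applied at the common norming vector $Tw_2$, not smoothness of $W$ read through $T^*$.
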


\begin{proof} 
(a). Suppose $\widetilde{T} \in \Anusuminf$. Given $\e \in (0, 1)$, we set $\eta(\e, T) := \eta(\e, \widetilde{T}) > 0$. Let $w_0 \in S_W$ be such that $\|T(w_0)\| > 1 - \frac{\eta(\e, T)}{2}.$ Take $\widetilde{z_0}^* \in S_{Z^*}$ to be such that
\begin{equation*}
|\langle \widetilde{z_0}^*, T(w_0)\rangle| = \|T(w_0)\| > 1 - \frac{\eta(\e, T)}{2}.
\end{equation*}	
By the Bishop-Phelps theorem, there is $z_0^* \in S_{Z^*}$ and $\widetilde{z_0} \in S_Z$ such that $|\langle z_0^*, \widetilde{z_0}\rangle | = 1$ and $\|z_0^* - \widetilde{z_0}^* \| < \frac{\eta(\e, T)}{2}$.
Since $\langle z_0^*, \widetilde{z_0}\rangle = e^{i \theta}$ for some $\theta\in [0, 2\pi)$, we take $z_0 := e^{-i \theta} \widetilde{z_0} \in S_Z$ which satisfies $\langle z_0^*, z_0\rangle = 1$ and
\begin{eqnarray*}
|\langle z_0^*, T(w_0)\rangle | &=& |\langle \widetilde{z_0}^*, T(w_0)\rangle + \langle z_0^* - \widetilde{z_0}^*, T(w_0)\rangle | \\
&\geq& |\langle \widetilde{z_0}^*, T(w_0)\rangle | - \|z_0^* - \widetilde{z_0}^*\| \\
&>& 1 - \eta(\e, T).
\end{eqnarray*}
%So, there are $z_0^* \in S_{Z^*}$ and $z_0 \in S_Z$ such that $\langle z_0^*,z_0\rangle = 1$ and $|\langle z_0^*, T(w_0)\rangle | > 1 - \eta(\e, T)$. 
Consider the point $((w_0, z_0), (0, z_0^*)) \in \Pi(W \oplus_{\infty} Z)$. Then, since $\nu(\widetilde{T}) = 1$ and
\begin{equation*}
| \langle (0, z_0^*), \widetilde{T} (w_0, z_0) \rangle| = |\langle z_0^*, T(w_0)\rangle | > 1 - \eta(\e, T) = 1 - \eta(\e, \widetilde{T}),
\end{equation*}	
there is $((w_1, z_1), (w_1^*, z_1^*)) \in \Pi(W \oplus_{\infty} Z)$ such that
\begin{equation*}
| \langle (w_1^*, z_1^*), \widetilde{T}(w_1, z_1) \rangle| = 1, \ \ \|(w_1, z_1) - (w_0, z_0)\|_{\infty} < \e \ \ \mbox{and} \ \ \|(w_1^*, z_1^*) - (0, z_0^*)\|_1 < \e.
\end{equation*}	
So, since $1 = | \langle (w_1^*, z_1^*), \widetilde{T}(w_1, z_1) \rangle| = |\langle z_1^*, T(w_1)\rangle | \leq \|T(w_1)\| \leq 1$, we get that $\|T(w_1)\| = \|w_1\| = 1$. Finally, $\|w_1 - w_0\| \leq \|(w_1, z_1) - (w_0, z_0)\|_{\infty} < \e$. This shows that $T \in \AnoWZ$.

(b). Suppose $T \in \AnoWZ$. It is not difficult to see that $ \nu (\widetilde{T}) = 1$ and that $\widetilde{T}$ attains its numerical radius. 
%We always have $ \nu (\widetilde{T}) \leq \|\widetilde{T}\| = \|T\| = 1$. Now take $w_0 \in S_W$ to be such that $\|T(w_0)\| = \|T\| = 1$. Let $z_0^* \in S_{Z^*}$ be such that $|z_0^*(T(w_0))| = \|T(w_0)\| = 1$. Since $Z$ is reflexive, there is $z_0 \in S_Z$ such that $z_0^*(z_0) = 1$. Consider the point $((w_0, z_0), (0, z_0^*)) \in \Pi(W \oplus_{\infty} Z)$. Then $|\langle \widetilde{T}(w_0, z_0), (0, z_0^*) \rangle| = |z_0^*(T(w_0))| = 1$. 
Now let $\e \in (0, 1)$ be given and set $\eta(\e, \widetilde{T})$ as the positive real number $\eta(\e, \widetilde{T}) := \min \left\{ \eps_0, \eta \left(\eps_0, T \right)\right\},$
where 
$$
\eps_0 = \min\left\{ \frac{1}{2} \delta_{Z^*} \left(\min\left\{ \frac{\delta_{Z} (\eps)}{2}, \frac{\eps}{2}\right\} \right), \frac{\delta_{Z} (\eps)}{2}, \frac{\eps}{2} \right\}. 
$$

Let $((w_1, z_1), (w_1^*, z_1^*)) \in \Pi (W \oplus_{\infty} Z)$ be such that
\begin{equation*}
\left| \langle z_1^*, T(w_1) \rangle \right| = \left| \langle (w_1^*, z_1^*), \widetilde{T} (w_1, z_1) \rangle  \right|> 1 - \eta(\e, \widetilde{T}).
\end{equation*}	
Since $\|T(w_1)\| \geq \left| \langle z_1^*, T(w_1) \rangle \right| > 1 - \eta(\e, \widetilde{T}),$ there is $w_2 \in S_W$ such that $\|T(w_2)\| = 1$ and $\|w_2 - w_1\| < \eps_0.$
Since $\|z_1^*\| \geq \left| \langle z_1^*, T(w_1) \rangle \right| > 1 - \eta(\e, \widetilde{T})$, we get that $\|w_1^*\| < \eta(\e, \widetilde{T}) \leq \frac{\e}{2}$.
Let $\theta \in \mathbb{R}$ be such that $ \langle z_1^*, T(w_2) \rangle = | \langle z_1^*, T (w_2) \rangle | e^{i\theta}$. 
Pick $z_2^* \in S_{Z^*}$ to be such that $\langle z_2^*, T(w_2) \rangle = e^{i\theta}$ 
and notice that $\left| \langle z_1^*, T(w_2) \rangle \right| > 1 - 2 \eps_0 > 1 - \delta_{Z^*} \left( \min \left\{ \frac{\delta_Z(\e)}{2}, \frac{\e}{2} \right\} \right).$
Thus,
\begin{align}\label{sumeq8.5mg}
\left\| \frac{z_1^* + z_2^*}{2} \right\| \geq \left| \left\langle \frac{z_1^* + z_2^*}{2}, T(w_2) \right\rangle \right|
= \frac{| \langle z_1^*, T(w_2) \rangle | + 1}{2} \nonumber > 1 - \delta_{Z^*} \left( \min \left\{ \frac{\delta_Z(\e)}{2}, \frac{\e}{2} \right\} \right).
\end{align}
This implies that $\|z_2^* - z_1^*\| < \min \left\{ \frac{\delta_Z(\e)}{2}, \frac{\e}{2} \right\}$.
By using the above estimates, 
\begin{align*}
\left\| \frac{T(e^{-i\theta} w_2) + z_1}{2} \right\| &\geq \left| \left\langle z_1^*, \frac{T(e^{-i\theta} w_2) + z_1}{2} \right\rangle \right| 
= \left| \frac{| \langle z_1^*, T(w_2) \rangle | +1 - \langle w_1^*, w_1 \rangle }{2} \right| \geq \\
&\geq \left| \frac{| \langle z_1^*, T(w_2) \rangle | +1}{2} \right| - \left| \frac{ \langle w_1^*, w_1 \rangle }{2} \right| \geq 1 - \delta_{Z} (\eps)
\end{align*}
and so $\|T(e^{-i\theta} w_2) - z_1\| < \e$. 
Finally, we conclude that $\widetilde{T}$ attains its numerical radius at the point $\left((w_2, T(e^{-i\theta} w_2)), (0, z_2^*) \right) \in \Pi(W \oplus_{\infty} Z)$ which is close to $((w_1, z_1), (w_1^*, z_1^*))$; hence $\widetilde{T} \in \Anusuminf$.  
\end{proof}

\begin{remark}
Similar to what happened on Proposition \ref{propsums1}, Proposition \ref{propsums2}.(b) is not true in general for arbitrary Banach spaces. Indeed, consider the real Banach space $\ell_1$. Like we did in Remark \ref{counterexsum1}, we will show that the operator introduced in Example \ref{ex3} does not satisfy the property stated in Proposition \ref{propsums2}.(b): Let $S \in \mathcal{L} (\ell_1)$ be the operator defined in Example \ref{ex3} and let $\widetilde{S}\in \mathcal{L}(\ell_1 \oplus_\infty \ell_1)$ be defined accordingly. Notice as before that if $((x,y),(x^*,y^*))\in\Pi(\ell_1 \oplus_\infty \ell_1)$ satisfies
$|\langle (x^*,y^*),\widetilde{S}(x,y) \rangle|=|\langle y^*, S(x)\rangle|=\left| \sum_{j=1}^{\infty} \frac{y^*(j) x(1)}{2^j}\right|=1,$
then $y^*(j)x(1)$ has to be equal to either $1$ or $-1$ for all $j\in \mathbb{N}$. From here, we get that the only possibilities have the form $x = se_1$, 
$y = (y(1),y(2),y(3),\ldots)$ with $ \sum_{j=1}^{\infty} y(j) =r$, 
$x^* = 0$, and $y^* = (r,r,r,\ldots)$, where $s,r\in \{ -1, 1\}$. Assuming that for $\varepsilon \in (0, 1)$, there exists $\eta(\varepsilon, \widetilde{S}) > 0$, we get a contradiction in the same manner as in Remark \ref{counterexsum1}.
\end{remark}

\begin{remark}
Once again, there exists an operator $S\in \mathcal{L}(W\oplus_1 Z) $, with $W$ uniformly smooth and $Z$ uniformly convex, such that $S\in \Anusuminf$ but $\widecheck{S}\notin \AnoWZ$ (note that this does not contradict Proposition \ref{propsums2}, since our $S$ is not of the form $\widetilde{T}$ for any operator $T$). Indeed, the same argument used in Remark \ref{remarksums2} shows that $S \in \mathcal{L}( \ell_2 \oplus_{\infty} \ell_2 ) $, which is defined as 
$$
S(x,y) = ( ( x(1) , 0, 0, \cdots), (0,0,0,\cdots)), \quad \forall (x,y) \in \ell_2 \oplus_{\infty} \ell_2, 
$$
where $\ell_2$ is a real space, belongs to $\A_{\numerical} (\ell_2 \oplus_{\infty} \ell_2)$. However, $\widecheck{S} = 0$ cannot belong to $\A_{\|\cdot\|} (\ell_2, \ell_2)$. 
\end{remark}

We finish the paper by noting that Propositions \ref{propsums1}.(b) and \ref{propsums2}.(b) are no longer true for $p$-sums with $1 < p < \infty$. Indeed, let $X$ be a uniformly convex and uniformly smooth Banach space and consider the identity operator $\Id_X \in \mathcal{L}(X)$. Clearly, $\Id_X $ belongs to $\AnoX$. On the other hand, $\widetilde{\Id}_X \in \mathcal{L}(X \oplus_p X)$ is defined as $\widetilde{\Id}_X(x_1, x_2) = (0, x_1)$ for all $x_1, x_2 \in X$. Then $ \nu (\widetilde{\Id}_X) \leq \|\widetilde{\Id}_X\| = \| \Id_X\| = 1$. If $|\langle (x_1^*, x_2^*), \widetilde{\Id}_X(x_1, x_2) \rangle| = 1$ for some $((x_1, x_2), (x_1^*, x_2^*)) \in \Pi(X \oplus_p X)$, we would have $| \langle x_2^*, x_1 \rangle | = 1$, which would imply $\|x_2^*\| = \|x_1\| = 1$. Because of this, we would have $x_1^* = x_2 = 0$ since $\|x_1^*\|^q + \|x_2^*\|^q = 1 = \|x_1\|^p + \|x_2\|^p$ with $\frac{1}{p} + \frac{1}{q} = 1$, contradicting the assumption $ \langle x_1^*, x_1 \rangle + \langle x_2^*, x_2 \rangle = 1$. So, $\widetilde{\Id}_X$ cannot attain its numerical radius; hence cannot belong to $\mathcal{A}_{\num} (X \oplus_p X)$.

%\textcolor{red}{(I don't know if items (a) hold for $p$-sums, though.)}
 
\proof[Acknowledgements]

The authors would like to thank Manuel Maestre for suggesting the topic of the article and his helpful comments during his visit to POSTECH. They also would like to thank Miguel Mart\'in and Abraham Rueda Zoca for fruitful conversations on the topic of the paper.

\end{document}